\pdfoutput=1
\RequirePackage{ifpdf}
\ifpdf 
\documentclass[pdftex]{sigma}
\else
\documentclass{sigma}
\fi

\numberwithin{equation}{section}

\newtheorem{Theorem}{Theorem}[section]
\newtheorem*{Theorem*}{Theorem}
\newtheorem{Corollary}[Theorem]{Corollary}
\newtheorem{Lemma}[Theorem]{Lemma}
\newtheorem{Proposition}[Theorem]{Proposition}
 { \theoremstyle{definition}
\newtheorem{Definition}[Theorem]{Definition}

\newtheorem{Example}[Theorem]{Example}
\newtheorem{Remark}[Theorem]{Remark} }

\makeatletter
\newcommand*\owedge{\mathpalette\@owedge\relax}
\newcommand*\@owedge[1]{%
 \mathbin{%
 \ooalign{%
 $#1\m@th\bigcirc$\cr
 \hidewidth$#1\m@th\wedge$\hidewidth\cr
 }%
 }%
}
\makeatother

\begin{document}
\allowdisplaybreaks

\renewcommand{\PaperNumber}{087}

\FirstPageHeading

\ShortArticleName{Deformation of the Weighted Scalar Curvature}

\ArticleName{Deformation of the Weighted Scalar Curvature}

\Author{Pak Tung HO~$^{\rm a}$ and Jinwoo SHIN~$^{\rm b}$}

\AuthorNameForHeading{P.T.~Ho and J.~Shin}

\Address{$^{\rm a)}$~Department of Mathematics, Tamkang University, Tamsui, New Taipei City 251301, Taiwan}
\EmailD{\href{mailto:paktungho@yahoo.com.hk}{paktungho@yahoo.com.hk}}

\Address{$^{\rm b)}$~Korea Institute for Advanced Study, Hoegiro 85, Seoul 02455, Korea}
\EmailD{\href{mailto:shinjin@kias.re.kr}{shinjin@kias.re.kr}}

\ArticleDates{Received December 07, 2022, in final form October 30, 2023; Published online November 04, 2023}

\Abstract{Inspired by the work of Fischer--Marsden [\textit{Duke Math.~J.} \textbf{42} (1975), 519--547], we study in this paper the deformation of the weighted scalar curvature. By studying the kernel of the formal $L_\phi^2$-adjoint for the linearization of the weighted scalar curvature, we prove several geometric results. In particular, we define a~weighted vacuum static space, and study locally conformally flat weighted vacuum static spaces. We then prove some stability results of the weighted scalar curvature on flat spaces. Finally, we consider the prescribed weighted scalar curvature problem on closed smooth metric measure spaces.}

\Keywords{weighted scalar curvature; smooth metric measure space; vacuum static space}

\Classification{53C21; 53C23}

\section{Introduction}

A \textit{smooth metric measure space} is the tuple $\bigl(M,g,{\rm e}^{-\phi}{\rm d}V_g,m\bigr)$, where $(M,g)$ is a smooth Riemannian manifold, ${\rm d}V_g$ is the volume form of $g$, ${\rm e}^{-\phi}{\rm d}V_g$ is a smooth measure determined by $\phi\in C^\infty(M)$, and $m$ is a dimensional parameter with $0\leq m\leq \infty$. It was first introduced by Bakry and \'{E}mery in \cite{Bakry&Emery}. Smooth metric measure spaces have recently attracted a lot of attention in Riemannian geometry. For example, they play an important role in Perelman's approach to the Ricci flow \cite{Perelman}.
The \textit{weighted scalar curvature} of the smooth metric measure space $\bigl(M,g,{\rm e}^{-\phi}{\rm d}V_g,m\bigr)$ is defined as
\begin{equation}\label{wsc}
 R_\phi^m=R+2\Delta \phi-\frac{m+1}{m}|\nabla \phi|^2,
\end{equation}
where $R$ is the scalar curvature of $g$, $\Delta$ and $\nabla$ are the Laplacian and the gradient of $g$, respectively. The geometric meaning of the weighted scalar curvature is as follows: Let $(F^m,h)$ be the flat $m$-torus. We regard $\bigl(M,g,{\rm e}^{-\phi}{\rm d}V_g,m\bigr)$ as the base of the warped product
\begin{equation}\label{warped1}
 \bigl(M\times F^m,g\oplus {\rm e}^{-\frac{2\phi}{m}}h\bigr).
\end{equation}
Then the weighted scalar curvature $R_\phi^m$ is the scalar curvature of the warped product \eqref{warped1}.

In \cite{Fischer&Marsden}, Fischer and Marsden studied
the deformation of the scalar curvature. In order to study this problem, they studied the kernel of formal $L^2$-adjoint for the linearization of scalar curvature. More precisely, they considered the scalar curvature as a function on the space of Riemannian metrics, i.e.,
the scalar curvature map $g\mapsto R(g)$.
Then they computed the linearization
\[L_g(h):= \left.\frac{{\rm d}}{{\rm d}t}\right|_{t=0}R(g+th)\]
of the scalar curvature map and its formal $L^2$-adjoint~$L_g^*$. They showed that a closed Riemannian manifold with $\ker L_g^*= \{0\}$ is linearization stable and hence any smooth function sufficiently close to the scalar curvature of the background metric can be realized as the scalar curvature of a nearby metric.
A Riemannian manifold $(M,g)$ with $\ker L_g^*\neq \{0\}$ is also called \textit{vacuum static space}, which has been widely studied by many authors. See \cite{Corvino1,Corvino&Eichmair&Miao, Corvino&Schoen, Kobayashi, Qing&Yuan1, Qing&Yuan2} and the references therein.
In recent years there has been developed a general theory for ``conformally variational invariants''
which gives sufficient conditions to perturb a given Riemannian manifold such that a given scalar
Riemannian invariant achieves a given function \cite{Case&Lin&Yuan1,Case&Lin&Yuan2}.
Specializing this to scalar curvature recovers the result of Fischer--Marsden, and the results are
also applicable to $Q$-curvature, $\sigma_2$-curvature, and many more invariants.

Inspired by \cite{Fischer&Marsden}, we study in this paper the deformation of the weighted scalar curvature.
To~do this, we regard the weighted scalar curvature $R_\phi^m$ as a
function on the space of Riemannian metrics and the space of smooth functions,
\begin{equation*}
 (g,\phi)\mapsto \mathcal{R}(g,\phi)=R+2\Delta \phi -\frac{m+1}{m}|\nabla \phi|^2.
\end{equation*}
Let $\mathcal{DR}_{g,\phi}$ be the linearization of this weighted scalar curvature map, and let $\mathcal{DR}_{g,\phi}^*$ be the formal $L^2$-adjoint of $\mathcal{DR}_{g,\phi}$ in the weighted sense.
In Section \ref{sec3}, we find the precise expression of $\mathcal{DR}_{g,\phi}$ and $\mathcal{DR}^*_{g,\phi}$.
Similar to the vacuum static space, the \textit{weighted vacuum static space}
is defined to~be the smooth metric measure space with $\ker \mathcal{DR}_{g,\phi}^*\neq \{0\}$ (see Definition \ref{def_wstatic}).
By analyzing the kernel of $\mathcal{DR}_{g,\phi}^*$, we prove several geometric results.
In particular, we prove the following classification result of closed weighted vacuum static spaces.

\begin{Proposition}\label{propRneg}
 Let $\bigl(M,g,{\rm e}^{-\phi}{\rm d}V_g,m\bigr)$ be a closed weighted vacuum static space. Then $\bigl(M,g,\allowbreak {\rm e}^{-\phi}{\rm d}V_g,m\bigr)$ is either isometric to a Ricci-flat manifold with $\phi$
 being constant, or the weighted scalar curvature $R_\phi^m$ is a positive constant.
 In particular, any weighted vacuum static space has nonnegative constant weighted scalar curvature.
\end{Proposition}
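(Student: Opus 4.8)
The plan is to imitate, in the weighted setting, the argument of Fischer and Marsden. By the expression for $\mathcal{DR}^*_{g,\phi}$ obtained in Section~\ref{sec3}, the hypothesis $\ker\mathcal{DR}^*_{g,\phi}\neq\{0\}$ furnishes a function $f\not\equiv 0$ on $M$ solving the coupled system $\mathcal{DR}^*_{g,\phi}(f)=(0,0)$, whose two components are a symmetric $2$-tensor equation and a scalar equation. The $2$-tensor equation is the weighted analogue of the Fischer--Marsden equation $\nabla^2 f-(\Delta f)g-f\,\mathrm{Ric}=0$, with $\mathrm{Ric}$ replaced by the $m$-Bakry--\'{E}mery Ricci tensor $\mathrm{Ric}+\nabla^2\phi-\frac1m\,{\rm d}\phi\otimes{\rm d}\phi$, the Laplacian by a drift Laplacian, and with extra first-order $\nabla\phi$-terms; the scalar equation reduces, when $f$ is constant, to $\Delta_\phi\phi=0$, where $\Delta_\phi:=\Delta-\langle\nabla\phi,\nabla\,\cdot\,\rangle$ is the drift Laplacian, which is self-adjoint with respect to the weighted measure ${\rm e}^{-\phi}{\rm d}V_g$. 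I would then argue in three stages: show that $R_\phi^m$ is constant; determine its sign through a weighted $L^2$ identity; and analyse the borderline case $R_\phi^m\equiv 0$.

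For the first stage I would take the weighted divergence $\mathrm{div}_\phi$ (that is, $\mathrm{div}$ followed by subtraction of the contraction with $\nabla\phi$) of the $2$-tensor equation. The third-order terms in $f$ are reorganized by the Bochner/Ricci commutation identity; the curvature terms are handled by the contracted second Bianchi identity $\mathrm{div}\,\mathrm{Ric}=\frac12\,{\rm d}R$ together with the defining formula \eqref{wsc} for $R_\phi^m$; and the residual first-order terms should cancel once the scalar equation is substituted in. One then expects that only a term proportional to $f\,{\rm d}R_\phi^m$ survives, so that $f\,{\rm d}R_\phi^m\equiv 0$; since $M$ is connected and $f\not\equiv 0$, the zero set of $f$ has empty interior, and therefore $R_\phi^m$ is constant. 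I expect this to be the main obstacle, because it relies on tracking precisely every $\nabla\phi$-term generated when the third derivatives of $f$ are commuted and when the drift terms are differentiated.

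Once $R_\phi^m$ is constant, taking the $g$-trace of the $2$-tensor equation and combining it with the scalar equation should eliminate the terms involving $\Delta_\phi\phi$ and $\langle\nabla\phi,\nabla f\rangle$ and produce an eigenvalue identity of the form $\Delta_\phi f=-\frac{R_\phi^m}{\,n+m-1\,}\,f$; here the coefficient $n+m-1$ is the natural one, as $R_\phi^m$ is the scalar curvature of the $(n+m)$-dimensional warped product \eqref{warped1}, and only its positivity will be used. Multiplying by $f$ and integrating over the closed manifold against ${\rm e}^{-\phi}{\rm d}V_g$, and using $\int_M f\,\Delta_\phi f\,{\rm e}^{-\phi}{\rm d}V_g=-\int_M|\nabla f|^2\,{\rm e}^{-\phi}{\rm d}V_g$ by self-adjointness of $\Delta_\phi$, yields $(n+m-1)\int_M|\nabla f|^2\,{\rm e}^{-\phi}{\rm d}V_g=R_\phi^m\int_M f^2\,{\rm e}^{-\phi}{\rm d}V_g$. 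Since $f\not\equiv 0$, the weighted integral of $f^2$ is positive, so $R_\phi^m\ge 0$, with equality only when $\nabla f\equiv 0$.

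Finally, if $R_\phi^m>0$, then $R_\phi^m$ is a positive constant, which is the second alternative. If $R_\phi^m\equiv 0$, then $f$ is a nonzero constant, and inserting $\nabla f\equiv 0$ into the scalar equation forces $\Delta_\phi\phi=0$, i.e.\ $\Delta\phi=|\nabla\phi|^2$, which is exactly $\Delta\bigl({\rm e}^{-\phi}\bigr)=0$; on the closed manifold $M$ this makes ${\rm e}^{-\phi}$, hence $\phi$, constant. With $\phi$ constant the $2$-tensor equation collapses to $f\,\mathrm{Ric}=0$, so $(M,g)$ is Ricci-flat, the first alternative. In either case $R_\phi^m$ is a nonnegative constant, which establishes the last assertion.
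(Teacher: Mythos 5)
Your route is the paper's route: take the weighted divergence of the tensor equation to obtain $f\,{\rm d}R_\phi^m\equiv 0$ (this is Proposition~\ref{Rphiconst} in the paper; the weighted Bianchi identity of Lemma~\ref{bianchi} produces an extra term $-\frac{1}{m}\Delta_\phi\phi\,{\rm d}\phi$ which is cancelled precisely by substituting the scalar equation, as you anticipate), trace the system to get the eigenvalue identity \eqref{wstatic_del} and deduce $R_\phi^m\geq 0$ by weighted integration by parts, and in the borderline case show $f$ is constant, $\phi$ is constant and ${\rm Rc}=0$. Your observation that $\Delta_\phi\phi=0$ is exactly $\Delta\bigl({\rm e}^{-\phi}\bigr)=0$, so that $\phi$ is constant by harmonicity on a closed manifold, is a clean self-contained substitute for the paper's appeal to the Kim--Kim theorem that ${\rm Rc}_\phi^m=0$ on a closed manifold forces Ricci-flatness with $\phi$ constant.

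There is, however, one genuine gap. In your first stage you pass from $f\,{\rm d}R_\phi^m\equiv 0$ to ${\rm d}R_\phi^m\equiv 0$ by asserting that ``since $M$ is connected and $f\not\equiv 0$, the zero set of $f$ has empty interior.'' That implication is false for a general smooth function: a nonzero smooth function on a connected manifold can vanish on an open set. What saves the argument is the equation itself. Rewriting the tensor equation in the form \eqref{wstatic2}, ${\rm Hess}_gf=f\bigl({\rm Rc}_\phi^m-\frac{R_\phi^m}{n+m-1}g\bigr)$, and restricting $f$ to a geodesic $\gamma$, the function $h(t)=f(\gamma(t))$ satisfies a linear second-order ODE $h''=q(t)h$; hence if $f$ and ${\rm d}f$ both vanished at some point, $f$ would vanish along every geodesic through that point and therefore on all of $M$ by completeness, contradicting $f\not\equiv 0$. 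Consequently ${\rm d}f\neq 0$ on $f^{-1}(0)$, so $f^{-1}(0)$ is a regular hypersurface and $\{f\neq 0\}$ is dense, which is what you actually need. This unique-continuation step is where the paper's proof of Proposition~\ref{Rphiconst} does real work, and it must be supplied. (Separately, you leave the divergence computation itself as a sketch; the plan is correct, but note that the relevant identity in the weighted setting is Lemma~\ref{bianchi} rather than ${\rm div}\,{\rm Rc}=\frac{1}{2}{\rm d}R$.)
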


In Section \ref{sec4}, we prove the following.

\begin{Theorem}\label{thm1.1}
 Let $\bigl(M,g,{\rm e}^{-\phi}{\rm d}V_g,m\bigr)$ be a connected weighted vacuum static space of dimension $n\geq3$ $($not necessarily compact$)$. If $\bigl(M,g,{\rm e}^{-\phi}{\rm d}V_g,m\bigr)$ is locally conformally flat in the weighted sense, then around any regular point of $f$, the manifold $(M,g)$ is locally a warped product with $(n-1)$-dimensional fibers of constant sectional curvature.
 \end{Theorem}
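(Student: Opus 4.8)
The plan is to run the classical Fischer--Marsden/Kobayashi argument for (unweighted) vacuum static spaces, carrying the extra $\phi$-terms along. First I would invoke the explicit description of $\ker\mathcal{DR}^*_{g,\phi}$ obtained in Section~\ref{sec3}: a nonzero $f\in\ker\mathcal{DR}^*_{g,\phi}$ satisfies an equation expressing $\mathrm{Hess}\,f$ as $f$ times a weighted curvature $2$-tensor (a weighted Schouten/Ricci type tensor) plus a scalar multiple of $g$, together with a companion scalar identity coupling $f$, $\langle\nabla f,\nabla\phi\rangle$ and $\Delta_\phi f$. Now fix a regular point $p$ of $f$, i.e.\ $\mathrm{d}f(p)\neq0$, so that on a neighbourhood $U$ of $p$ the level sets $\Sigma_c=\{f=c\}$ are smooth hypersurfaces with unit normal $\nu=\nabla f/|\nabla f|$.

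The key input is the weighted local conformal flatness hypothesis. I would use it to show that on $U$ the relevant weighted curvature tensor $A$ in the Hessian equation has $\nabla f$ as an eigenvector and that its restriction to $T\Sigma_c$ is a pointwise multiple of the induced metric, and that simultaneously $\nabla\phi$ is parallel to $\nabla f$, so that $\phi$ is constant on each $\Sigma_c$. The mechanism is the vanishing of the associated weighted Cotton tensor --- a consequence of the weighted Weyl tensor vanishing when $n\geq4$, and a direct consequence of weighted local conformal flatness when $n=3$ --- combined with the (weighted) second Bianchi identity and the kernel equations: covariantly differentiating the Hessian equation and using the curvature symmetries forces the ``two distinct eigenvalues'' structure on $A$, with $\nabla f$ spanning the one-dimensional eigenspace. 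This is the step I expect to be the main obstacle, because the $\phi$-dependent corrections in the weighted Bianchi/Cotton identities must be shown not to spoil this alignment, and the alignment of $\nabla\phi$ with $\nabla f$ has to be extracted in the same breath rather than assumed.

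Granting this, for any $X$ tangent to $\Sigma_c$ one gets $\nabla_X|\nabla f|^2=2\,\mathrm{Hess}\,f(\nabla f,X)=2\bigl(fA(\nabla f,X)+\lambda\,g(\nabla f,X)\bigr)=0$, so $|\nabla f|$ is constant on each level set. Reparametrising the flow of $\nabla f/|\nabla f|^2$ by arc length $r$ then yields $g=\mathrm{d}r^2+g_r$ on $U$ with $f=f(r)$ and (by the alignment of $\nabla\phi$) $\phi=\phi(r)$. The $T\Sigma$-component of the Hessian equation says the second fundamental form of $\Sigma_r$ equals $\frac{1}{|\nabla f|}\bigl(f\mu(r)+\lambda(r)\bigr)g_r$, i.e.\ the slices are totally umbilic with umbilicity factor constant along each slice; hence $\partial_r g_r$ is proportional to $g_r$, giving $g_r=\varphi(r)^2 g_\Sigma$ for a fixed metric $g_\Sigma$ on the slice. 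Thus $(U,g)$ is a warped product over an interval with $(n-1)$-dimensional fibre $(\Sigma,g_\Sigma)$.

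Finally I would show that $(\Sigma,g_\Sigma)$ has constant sectional curvature. Substituting the warped-product form $\mathrm{d}r^2+\varphi(r)^2g_\Sigma$ into the weighted local conformal flatness condition (equivalently, into the vanishing of the weighted Weyl tensor for $n\ge4$, or the weighted Cotton tensor for $n=3$) and invoking the Gauss equation expresses the sectional curvatures of $g_\Sigma$ in terms of $r$-dependent quantities only, which forces $g_\Sigma$ to have constant curvature when $n-1\ge3$; the remaining low-dimensional cases are immediate. As an alternative to this last computation one may pass to the warped product $\bigl(M\times F^m,\,g\oplus{\rm e}^{-2\phi/m}h\bigr)$ of \eqref{warped1}, which is genuinely locally conformally flat and on which the pulled-back $f$ satisfies a classical static-type equation, and then read off the fibre structure from the Fischer--Marsden/Kobayashi theorem (\cite{Fischer&Marsden, Kobayashi}) applied there.
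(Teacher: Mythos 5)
Your plan follows essentially the same route as the paper's proof, which adapts the argument of \cite{Catino} (see also \cite{He}): differentiate the Hessian equation \eqref{wstatic2} and commute derivatives to get an identity for $R_{ijkl}\nabla^l f$; use weighted local conformal flatness both to kill the derivative terms in that identity and to rewrite $R_{ijkl}\nabla^l f$ algebraically via $A_\phi^m\equiv 0$; conclude that $\nabla f$ is an eigenvector of ${\rm Rc}_\phi^m$ and that ${\rm Rc}_\phi^m$ restricted to a level set is pure trace; deduce that the regular level sets are totally umbilic with $|\nabla f|$ locally constant on them; and finish with the Gauss equation. Two comments. First, the step you single out as the main obstacle is handled in the paper more cheaply than you anticipate: weighted local conformal flatness implies the weighted Schouten tensor is a Codazzi tensor (\cite[Lemma~3.2]{Case2}), and since $R_\phi^m$ is constant this makes ${\rm Rc}_\phi^m$ itself Codazzi; substituting this into the commutator identity and comparing with the expression for the curvature coming from $A_\phi^m\equiv 0$ is then pure algebra --- no weighted Bianchi or Cotton computation is needed and there are no $\phi$-dependent correction terms to control. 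Second, and more substantively, your claim that $\nabla\phi$ must be parallel to $\nabla f$ (so that $\phi=\phi(r)$) is the one genuine gap in your sketch: you give no mechanism for it, and I do not see how to extract it from the hypotheses. Fortunately it is also unnecessary. The theorem concerns only the metric $g$, and the paper never establishes or uses any alignment of $\nabla\phi$ with $\nabla f$; the constancy along each slice of the umbilicity factor comes from the identity $f\bigl({\rm Rc}^\phi_{rr}-R_\phi^m/(n+m-1)\bigr)=f''(r)$ together with $f=f(r)$, not from any control on $\phi$. You should simply delete that claim from the argument. Your alternative ending via the warped product \eqref{warped1} and the classical results of \cite{Fischer&Marsden, Kobayashi} is in the spirit of the definition of $A_\phi^m$ but is not the paper's route; it would additionally require $m$ to be a positive integer and a verification that a weighted static potential on the base corresponds to a classical static potential on the total space, neither of which you supply.
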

We say that $\bigl(M,g,{\rm e}^{-\phi}{\rm d}V_g,m\bigr)$ is \textit{locally conformally flat in the weighted sense} if for each $p\in M$, there is a neighborhood $U$ of $p$ on which there is a conformal factor $u$ for which $\bigl({\rm e}^{2u}g,{\rm e}^{(m+n)u}{\rm e}^{-\phi}{\rm d}V_g\bigr)=\bigl(g_{\rm flat},{\rm d}V_{g_{\rm flat}}\bigr)$. In \cite{Kobayashi}, Kobayashi classified the conformally flat vacuum static spaces. Independently, Lafontaine \cite{Lafontaine} obtained a classification of closed conformally flat vacuum static spaces.

 Linearization stability of the scalar curvature on flat spaces was studied in \cite{Fischer&Marsden},
 and the corresponding stability for $Q$-curvature was studied in \cite{Lin&Yuan}.
 Inspired by these results, in Section~\ref{sec5}, we consider the linearization stability of the weighted scalar curvature on flat spaces. In particular, we have the following.

\begin{Theorem}\label{rigidity}
 For $n\geq3$, let $\bigl(M,\overline{g},{\rm e}^{-\overline{\phi}}{\rm d}V_{\overline{g}},m\bigr)$ be a compact smooth metric measure space with
 $\overline{g}$ being flat, $\overline{\phi}$ being constant, and $m$ being a positive integer.
 There is an $\epsilon>0$ such that if $(g,\phi)$ has nonnegative weighted scalar curvature and is $\epsilon$-close to $\bigl(\overline{g},\overline{\phi}\bigr)$ in $C^2$, then $g$ is flat and $\phi$ is constant.
\end{Theorem}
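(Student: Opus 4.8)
The plan is to deduce the theorem from the classical flat rigidity of Fischer--Marsden \cite{Fischer&Marsden} by lifting everything to the warped product \eqref{warped1}. Since $m$ is a positive integer, the fibre $\bigl(F^m,h\bigr)$ is a genuine compact manifold (the flat $m$-torus), so $N:=M\times F^m$ is compact, and for every pair $(g,\phi)$ the metric $\widetilde g:=g\oplus {\rm e}^{-2\phi/m}h$ on $N$ has scalar curvature $R(\widetilde g)=\mathcal R(g,\phi)=R_\phi^m$, as recalled after \eqref{warped1}. Applied to the background data, $\widetilde{\overline g}:=\overline g\oplus {\rm e}^{-2\overline\phi/m}h$ is a product of flat metrics — here one uses that $\overline\phi$ is constant, so ${\rm e}^{-2\overline\phi/m}$ is a positive constant — hence $\widetilde{\overline g}$ is flat on $N$. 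Moreover $(g,\phi)\mapsto\widetilde g$ is continuous from a $C^2$-neighbourhood of $\bigl(\overline g,\overline\phi\bigr)$ into a $C^2$-neighbourhood of $\widetilde{\overline g}$; thus, after shrinking $\epsilon$, the hypotheses ``$R_\phi^m\geq 0$ and $(g,\phi)$ is $\epsilon$-close to $\bigl(\overline g,\overline\phi\bigr)$ in $C^2$'' become ``$R(\widetilde g)\geq 0$ and $\widetilde g$ is arbitrarily $C^2$-close to the flat metric $\widetilde{\overline g}$''.

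The heart of the matter is then the flat local rigidity for the ordinary scalar curvature: on a compact manifold $N$ carrying a flat metric $\widetilde{\overline g}$, there is $\delta>0$ such that any $\widetilde g$ with $R(\widetilde g)\geq 0$ and $\|\widetilde g-\widetilde{\overline g}\|_{C^2}<\delta$ is flat. For $N$ a torus this is proved in \cite{Fischer&Marsden}, and the general compact flat case reduces to it by pulling $\widetilde g$ back along the finite torus cover furnished by Bieberbach's theorem (flatness being local and covering-invariant). If one prefers a direct argument in the weighted setting, the formulas of Section~\ref{sec3} give $\mathcal{DR}_{\overline g,\overline\phi}(h,\psi)=\operatorname{div}_{\overline g}\operatorname{div}_{\overline g}h-\Delta_{\overline g}(\operatorname{tr}_{\overline g}h)+2\Delta_{\overline g}\psi$ at the flat point, with $L^2_{\overline\phi}$-adjoint $\mathcal{DR}^*_{\overline g,\overline\phi}(f)=\bigl(\operatorname{Hess}_{\overline g}f-(\Delta_{\overline g}f)\overline g,\,2\Delta_{\overline g}f\bigr)$; on a closed manifold $\ker\mathcal{DR}^*_{\overline g,\overline\phi}$ consists exactly of the constants, so $\bigl(\overline g,\overline\phi\bigr)$ is a degenerate weighted vacuum static space (consistent with Proposition~\ref{propRneg}) and the cokernel of $\mathcal{DR}_{\overline g,\overline\phi}$ is one-dimensional, spanned by the constant. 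Writing $R_\phi^m=\mathcal{DR}_{\overline g,\overline\phi}(h,\psi)+\mathcal Q(h,\psi)$ with $h=g-\overline g$, $\psi=\phi-\overline\phi$ and $\mathcal Q$ collecting the higher-order terms, projecting onto this cokernel and running the Lyapunov--Schmidt/second-variation analysis of Fischer--Marsden (cf.\ also \cite{Lin&Yuan}) yields the rigidity; since $R_\phi^m=R(\widetilde g)$ this is again literally a scalar-curvature statement on $N$.

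Granting that $\widetilde g$ is flat, it remains to unwind the warped product; working on each connected component, assume $M$ connected. The leaf $M\times\{q\}$ is totally geodesic in $\bigl(N,\widetilde g\bigr)$ and carries the induced metric $g$, so $(M,g)$ is flat. For the function, the vanishing of the ``mixed'' sectional curvatures of $\widetilde g$ — a horizontal plane against a vertical direction — forces $\operatorname{Hess}_g\bigl({\rm e}^{-\phi/m}\bigr)=0$; taking the trace gives $\Delta_g\bigl({\rm e}^{-\phi/m}\bigr)=0$, and since $M$ is compact and connected, ${\rm e}^{-\phi/m}$, hence $\phi$, is constant.

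The hard part will be the flat local rigidity of the second paragraph, not the warped-product bookkeeping around it: because the flat background has nontrivial $\ker\mathcal{DR}^*_{\overline g,\overline\phi}$, one cannot simply invoke an implicit function theorem as in the non-degenerate vacuum static case, and the genuine work is to control the quadratic term $\mathcal Q$ along the one-dimensional cokernel (equivalently, to rule out first-order-flat-but-curved deformations) — exactly the point where either one quotes \cite{Fischer&Marsden} or redoes its obstruction analysis.
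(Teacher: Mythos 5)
Your proposal is correct and takes essentially the same route as the paper: the authors likewise take a closed flat $m$-dimensional fibre, observe that $\overline g\oplus {\rm e}^{-2\overline\phi/m}g_F$ is flat because $\overline\phi$ is constant, quote Theorem~B of Fischer--Marsden for the closed flat manifold $M\times F$ to conclude that the nearby warped-product metric $g\oplus {\rm e}^{-2\phi/m}g_F$ with nonnegative scalar curvature is flat, and then unwind to get $g$ flat and $\phi$ constant. The only difference is presentational: the paper cites Theorem~B directly for an arbitrary closed flat manifold rather than reducing to the torus via Bieberbach, and it leaves the final unwinding step (total geodesy of the leaves and vanishing of the mixed curvatures) implicit where you spell it out.
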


In Section \ref{sec5}, we also consider the prescribed weighted scalar curvature problem on closed smooth metric measure spaces.
We have the following.
\begin{Theorem}\label{thm1.3}
 Let $\bigl(M,g_0,{\rm e}^{-\phi_0}{\rm d}V_{g_0},m\bigr)$ be a closed smooth metric measure space with weighted scalar curvature $R_{\phi_0}^m$, and let $K$ be a non-constant smooth function on $M$. If there is a constant $c_0>0$ satisfying
\begin{equation*}
c_0\min K<R_{\phi_0}^m<c_0\max K,
\end{equation*}
there is a smooth metric measure $\bigl(M,g,{\rm e}^{-\phi}{\rm d}V_g,m\bigr)$ such that its weighted scalar curvature is~$K$.
\end{Theorem}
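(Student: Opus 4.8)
The plan is to convert the prescription into a single semilinear elliptic equation by a weighted conformal change, and then to solve that equation, exploiting a feature absent from the Fischer--Marsden situation: the reduced exponent is \emph{subcritical}. First, since $R_\phi^m(\lambda g)=\lambda^{-1}R_\phi^m(g)$ for $\lambda>0$, replacing a hypothetical solution $(g,\phi)$ by $(\lambda g,\phi)$ rescales its weighted scalar curvature; absorbing $c_0$ this way, we may assume $\min K<R_{\phi_0}^m(x)<\max K$ for all $x\in M$. Put $N:=m+n$ (assume $N\ge3$; the degenerate case $N\le2$ is handled directly) and use the ansatz $g=v^{4/(N-2)}g_0$, $\phi=\phi_0-\frac{2m}{N-2}\log v$ with $v>0$, so that the warped product $g\oplus{\rm e}^{-2\phi/m}h$ on $M\times F^m$ equals $v^{4/(N-2)}\bigl(g_0\oplus{\rm e}^{-2\phi_0/m}h\bigr)$. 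Since the weighted Laplacian $\Delta_{\phi_0}:=\Delta_{g_0}-\nabla\phi_0\cdot\nabla$ is the restriction to functions on $M$ of the Laplacian of that warped product, the conformal transformation law for scalar curvature gives that $R_\phi^m(g)=K$ is equivalent to
\[-\frac{4(N-1)}{N-2}\Delta_{\phi_0}v+R_{\phi_0}^m\,v=K\,v^{\frac{N+2}{N-2}},\qquad v>0.\]

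The key point is that $p:=\frac{N+2}{N-2}$ is subcritical for $H^1(M^n)\hookrightarrow L^q(M)$, since $N=n+m>n$ forces $p<\frac{n+2}{n-2}$ (nothing to check when $n\le2$); hence the associated functionals are compact and elliptic regularity applies without concentration analysis. In particular the weighted Yamabe-type minimization of $\int_M\bigl(\frac{4(N-1)}{N-2}|\nabla v|^2+R_{\phi_0}^m v^2\bigr){\rm e}^{-\phi_0}{\rm d}V_{g_0}$ under $\int_M v^{p+1}{\rm e}^{-\phi_0}{\rm d}V_{g_0}=1$ is always solvable, so after a further conformal change and a rescaling I may assume the background weighted scalar curvature is a constant $r$, whose sign equals that of the first eigenvalue $\Lambda$ of $-\frac{4(N-1)}{N-2}\Delta_{\phi_0}+R_{\phi_0}^m$ and whose magnitude is free; the hypothesis then reads $\min K<r<\max K$, which in particular rules out $r$ and $K$ having incompatible signs.

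It then remains to solve the displayed equation with constant $r$. If $r>0$ (so $\max K>0$): minimize $\int_M\bigl(\frac{4(N-1)}{N-2}|\nabla v|^2+rv^2\bigr){\rm e}^{-\phi_0}{\rm d}V_{g_0}$ over $\{v\in H^1:v\ge0,\ \int_M K|v|^{p+1}{\rm e}^{-\phi_0}{\rm d}V_{g_0}=1\}$; by subcriticality a minimizer exists, is positive and smooth by elliptic regularity and the strong maximum principle, and rescaling by the positive Lagrange multiplier yields a solution. If $K<0$ everywhere (then $r<\max K<0$): the functional $v\mapsto\int_M\bigl(\frac{2(N-1)}{N-2}|\nabla v|^2+\frac r2 v^2-\frac{K}{p+1}|v|^{p+1}\bigr){\rm e}^{-\phi_0}{\rm d}V_{g_0}$ is coercive and bounded below, its minimizer is nontrivial (it is negative along constants) hence positive, and it solves the equation directly. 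The remaining case is $r\le0$ with $K$ changing sign; here I would seek ordered barriers --- a small constant $\epsilon>0$ is a subsolution (this uses $\min K<r<0$, so $|r|<|\min K|$), and a supersolution dominating it is built from the upper bound $r<\max K$ together with the freedom to take $|r|$ small --- and then run the monotone iteration.

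The step I expect to be the main obstacle is precisely this last case: producing a positive supersolution above the subsolution when $r\le0$ and $K$ changes sign, i.e.\ solving the prescribed-weighted-scalar-curvature equation in the non-positive weighted Yamabe regime. This is delicate and is where the full two-sided bound $c_0\min K<R_{\phi_0}^m<c_0\max K$ must genuinely be used (for instance by comparison with auxiliary constant-coefficient equations, or through a linking-type variational argument, the pointwise bounds supplying the missing ordering or compactness). By contrast, the compactness and regularity that constitute the heart of the analogous scalar-curvature theorem of Fischer--Marsden come for free in the weighted setting, because the reduced exponent $\frac{N+2}{N-2}$ is subcritical on $M^n$.
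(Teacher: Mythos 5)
Your conformal reduction is correct as far as it goes: the transformation law for $R_\phi^m$ under $\bigl(g,\phi\bigr)=\bigl(v^{4/(N-2)}g_0,\ \phi_0-\tfrac{2m}{N-2}\log v\bigr)$, the observation that $p=\tfrac{N+2}{N-2}$ is subcritical for $H^1(M^n)$ because $N=n+m>n$, and the resolutions of the positive case and of the case $K<0$ everywhere all check out (modulo the remark that for non-integer $m$ one should derive the transformation law directly rather than via the warped product). But the overall strategy cannot prove the theorem, because you have confined yourself to the weighted conformal class of $(g_0,\phi_0)$, and there the statement is false: the obstructions to prescribing curvature within a fixed conformal class in the nonpositive cases survive subcriticality. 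Concretely, if $v>0$ solves $-a\Delta_{\phi_0}v+rv=Kv^{p}$ with $r<0$ and $\Omega=\{K\ge 0\}$ is nonempty and open, then $-a\Delta_{\phi_0}v\ge |r|v$ on $\Omega$, and pairing with the first Dirichlet eigenfunction of $-a\Delta_{\phi_0}$ on $\Omega$ forces $\lambda_1(\Omega)>|r|$. This Rauzy-type necessary condition is not implied by $c_0\min K<R_{\phi_0}^m<c_0\max K$ (take $K$ slightly positive on a large domain and very negative on a small one), and the same computation rules out any positive supersolution lying above your constant subsolution, so the monotone iteration in your "main obstacle" case cannot be started. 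The advertised "freedom to take $|r|$ small" is illusory: rescaling $g_0\mapsto\lambda g_0$ sends $(r,\lambda_1(\Omega))$ to $(r/\lambda,\lambda_1(\Omega)/\lambda)$ and, after normalizing $v$, returns the identical equation. The case $r=0$ fails as well: multiplying $-a\Delta_{\phi_0}v=Kv^{p}$ by $v^{-p}$ and integrating against ${\rm e}^{-\phi_0}{\rm d}V_{g_0}$ yields $\int_M K{\rm e}^{-\phi_0}{\rm d}V_{g_0}<0$ as a necessary condition, which the hypothesis does not guarantee.

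The missing idea is that one must leave the conformal class, and the pointwise two-sided bound is used for something entirely different from barriers. The paper first perturbs $(g_0,\phi_0)$ to $(g_1,\phi_1)$ with non-constant weighted scalar curvature still inside the strip $(c_1\min K,\,c_1\max K)$; by Proposition~\ref{Rphiconst} such a space cannot be weighted vacuum static, so $\mathcal{DR}^*_{(g_1,\phi_1)}$ is injective. The Kazdan--Warner approximation lemma (Lemma~\ref{approxlem}) --- for which the hypothesis $\min(c_1K)\le R_{\phi_1}^m\le\max(c_1K)$ is precisely the input --- produces a diffeomorphism $\varphi$ with $\|c_1K\circ\varphi-R_{\phi_1}^m\|_{L^p}$ arbitrarily small, and local surjectivity of the full linearization (Proposition~\ref{prop6.2}, solving in the image of $\mathcal{DR}^*_{(g_1,\phi_1)}$, a deformation of both $g$ and $\phi$ that is not conformal) realizes $c_1K\circ\varphi$; pulling back by $\varphi^{-1}$ and rescaling finishes. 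Your subcriticality observation is a genuine and pleasant feature of the weighted setting --- it makes the positive case elementary --- but it does not substitute for the diffeomorphism step.
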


We remark that, on closed manifolds, the prescribed scalar curvature problem has been studied by Kazdan and Warner in \cite{Kazdan&Warner1, Kazdan&Warner4, Kazdan&Warner2, Kazdan&Warner3},
and on compact Riemannian manifolds with boundary, the problem of prescribing the scalar curvature in $M$ and the mean curvature on the boundary $\partial M$ was studied in \cite{Cruz&Vitorio, Ho&Huang}.

 After this paper was finished,
it was brought to our attention that many more relevant weighted invariants are
constructed through the recent work of Khaitan \cite{Khaitan1,Khaitan2}. In light of the results of the $\sigma_2$-invariant and renormalized volume coefficients in closed Riemannian manifolds, we expect that results similar to ours also hold for the invariant constructed by Khaitan (cf.~\cite{Andrade,Silva1}).\looseness=-1

\section{Smooth metric measure space}

We collect in this section some basic definitions and facts for smooth metric measure spaces
which will be needed in the rest of the paper.
First, we have the following.

\begin{Definition}
 A \textit{smooth metric measure space} is a four-tuple $\bigl(M,g,{\rm e}^{-\phi}{\rm d}V_g,m\bigr)$,
 where $(M,g)$ is an $n$-dimensional Riemannian manifold, $\phi$ is a smooth function in $M$, and $m\in(0,\infty)$ is a~dimensional parameter.
\end{Definition}

We remark that in this paper we consider only the case where $m$ is a positive finite number. When $m=\infty$, the weighted scalar curvature is sometimes called the \textit{$P$-scalar curvature} and has been studied in \cite{Abedin&Corvino}.

\begin{Definition}
 Let $\bigl(M,g,{\rm e}^{-\phi}{\rm d}V_g,m\bigr)$ be a smooth metric measure space and let $(V,h_V)$ and $(W,h_W)$ be vector bundles with inner product over $M$, and let $\langle\cdot,\cdot\rangle_V$ and $\langle \cdot,\cdot\rangle_W$ be the corresponding inner products
 \begin{equation*}
 \langle \zeta_1,\zeta_2\rangle_V=\int_M h_V(\zeta_1,\zeta_2){\rm e}^{-\phi}{\rm d}V_g,\qquad
 \langle \xi_1,\xi_2\rangle_W=\int_M h_W(\xi_1,\xi_2){\rm e}^{-\phi}{\rm d}V_g
 \end{equation*}
 on sections $\zeta_i\in \Gamma(V)$ and $\xi_i\in\Gamma(W)$ determined by the measure ${\rm e}^{-\phi}{\rm d}V_g$. The \textit{weighted divergence} ${\rm div}_\phi\colon\Gamma(W)\to \Gamma(V)$ of an operator $D\colon\Gamma(V)\to \Gamma(W)$ is the (negative of the) formal adjoint of $D$ with respect to the inner products $\langle\cdot,\cdot\rangle_V$ and $\langle\cdot,\cdot\rangle_W$, i.e., for all $\zeta\in \Gamma(V)$ and $\xi\in \Gamma(W)$, at least one of which is compactly supported in $M$,
 \begin{equation*}
 \langle D(\zeta),\xi\rangle_W=-\langle \zeta,{\rm div}_\phi\xi\rangle_V.
 \end{equation*}
 The \textit{weighted Laplacian} $\Delta_\phi\colon C^\infty(M)\to C^\infty(M)$ is the operator $\Delta_\phi={\rm div}_\phi d$.
\end{Definition}

\begin{Lemma}[{\cite[Lemma 3.5]{Case4}}]
Let $\bigl(M,g,{\rm e}^{-\phi}{\rm d}V_g,m\bigr)$ be a smooth metric measure space.
The weighted divergence ${\rm div}_\phi$ is related to the usual divergence ${\rm div}_g$ by
\begin{equation*}
 {\rm div}_\phi={\rm e}^{\phi}\circ {\rm div}_g\circ {\rm e}^{-\phi},
\end{equation*}
where ${\rm e}^\phi$ and ${\rm e}^{-\phi}$ are regarded as multiplication operators. In particular, we have the formulas%
 \begin{equation*}
 {\rm div}_\phi \omega={\rm div}_g\omega-\iota_{\nabla \phi}\omega,\qquad
 \Delta_\phi w=\Delta_gw-\langle \nabla \phi,\nabla w\rangle
 \end{equation*}
 for all $\omega\in \Lambda^kT^*M$ and all $w\in C^\infty(M)$.
\end{Lemma}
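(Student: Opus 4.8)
The plan is to derive the operator identity directly from the defining adjoint relations, exploiting the fact that the weighted inner products differ from the unweighted ones only by the scalar factor ${\rm e}^{-\phi}$. First I would unravel the definition of ${\rm div}_\phi$: for any operator $D\colon\Gamma(V)\to\Gamma(W)$ and all sections $\zeta\in\Gamma(V)$, $\xi\in\Gamma(W)$, at least one of compact support, the weighted adjoint relation reads
\[
\int_M h_W(D\zeta,\xi)\,{\rm e}^{-\phi}\,{\rm d}V_g=-\int_M h_V(\zeta,{\rm div}_\phi\xi)\,{\rm e}^{-\phi}\,{\rm d}V_g.
\]
The key observation is that the weight ${\rm e}^{-\phi}$ is a scalar, so it can be absorbed into the $W$-slot on the left, rewriting $h_W(D\zeta,\xi)\,{\rm e}^{-\phi}=h_W(D\zeta,{\rm e}^{-\phi}\xi)$.

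Next I would apply the defining relation for the \emph{unweighted} divergence ${\rm div}_g$ (the negative formal $L^2$-adjoint of $D$ with respect to ${\rm d}V_g$) to the section ${\rm e}^{-\phi}\xi$, obtaining
\[
\int_M h_W(D\zeta,{\rm e}^{-\phi}\xi)\,{\rm d}V_g=-\int_M h_V\bigl(\zeta,{\rm div}_g({\rm e}^{-\phi}\xi)\bigr)\,{\rm d}V_g.
\]
Comparing this with the weighted relation above—after moving the scalar ${\rm e}^{-\phi}$ on its right-hand side into the $V$-slot as well—shows that both ${\rm e}^{-\phi}\,{\rm div}_\phi\xi$ and ${\rm div}_g({\rm e}^{-\phi}\xi)$ serve as the same negative $L^2$-adjoint of $D$ paired against $\zeta$. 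Since the identity holds for every compactly supported $\zeta$, the fundamental lemma of the calculus of variations forces the pointwise equality ${\rm div}_g({\rm e}^{-\phi}\xi)={\rm e}^{-\phi}{\rm div}_\phi\xi$, which rearranges to ${\rm div}_\phi={\rm e}^{\phi}\circ{\rm div}_g\circ{\rm e}^{-\phi}$ as claimed.

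For the two explicit formulas, I would simply apply the Leibniz rule to this composition. For a $k$-form $\omega$, expanding ${\rm div}_g({\rm e}^{-\phi}\omega)$ and using $\nabla({\rm e}^{-\phi})=-{\rm e}^{-\phi}\nabla\phi$ gives ${\rm div}_g({\rm e}^{-\phi}\omega)={\rm e}^{-\phi}\bigl({\rm div}_g\omega-\iota_{\nabla\phi}\omega\bigr)$; multiplying by ${\rm e}^{\phi}$ then yields ${\rm div}_\phi\omega={\rm div}_g\omega-\iota_{\nabla\phi}\omega$. The weighted Laplacian formula follows as the special case $\omega=dw$, since $\Delta_\phi w={\rm div}_\phi(dw)$ and $\iota_{\nabla\phi}(dw)=\langle\nabla\phi,\nabla w\rangle$.

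I do not anticipate a genuine obstacle here, as the argument is a routine integration-by-parts computation; the only points requiring care are the bookkeeping of sign conventions—in particular the sign built into the definition of the divergence as the \emph{negative} adjoint—and the justification that the integral identity, valid for all compactly supported test sections, upgrades to a pointwise operator identity.
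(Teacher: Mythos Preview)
Your argument is correct and is essentially the standard derivation: absorb the weight into the section, apply the unweighted adjoint relation, and read off the conjugation formula; the Leibniz-rule specializations are also fine. Note, however, that the paper does not supply its own proof of this lemma---it is quoted verbatim from \cite[Lemma~3.5]{Case4}---so there is nothing in the present paper to compare against, and your write-up effectively reproduces the proof one finds in that reference.
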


When $M$ is closed, it is well known that
\begin{equation}
\int_M \langle \nabla f,X\rangle {\rm e}^{-\phi}{\rm d}V_g=-\int_M f {\rm div}_\phi(X){\rm e}^{-\phi}{\rm d}V_g \label{IBP}
\end{equation}
for any vector field $X$ in $M$.

\begin{Definition}
 Let $\bigl(M,g,{\rm e}^{-\phi}{\rm d}V_g,m\bigr)$ be a smooth metric measure space. The \textit{Bakry--\'{E}mery Ricci tensor} ${\rm Rc}_\phi^m$ is the symmetric $(0,2)$-tensor
 \begin{equation*}
 {\rm Rc}_\phi^m:={\rm Rc}+{\rm Hess}_g\phi-\frac{1}{m} {\rm d}\phi\otimes {\rm d}\phi,
 \end{equation*}
 where ${\rm Rc}$ is the Ricci tensor with respect to $g$, and ${\rm Hess}_g$ is the Hessian of $\phi$ with respect to $g$.
 The \textit{weighted scalar curvature} $R_\phi^m$ is defined as
 \begin{equation*}
 R_\phi^m:=R+2\Delta_g\phi-\frac{m+1}{m} |\nabla \phi |^2,
 \end{equation*}
 where $R$ is the scalar curvature with respect to $g$.
\end{Definition}
It is important to note that the weighted scalar curvature is in general not the trace of the Bakry--\'{E}mery Ricci tensor. Indeed,
there holds (see \cite[formula~(4.1)]{Case4})
\begin{equation*}
 R_\phi^m=\operatorname{tr}{\rm Rc}_\phi^m+\Delta_\phi\phi.
\end{equation*}
The following proposition was proved in \cite[Proposition 4.2]{Case4}.

\begin{Lemma}\label{bianchi}
There holds
 \begin{equation*}
 {\rm div}_\phi {\rm Rc}_\phi^m=\frac{1}{2}{\rm d}R_\phi^m-\frac{1}{m}\Delta_\phi\phi {\rm d}\phi.
 \end{equation*}
\end{Lemma}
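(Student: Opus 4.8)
The plan is to reduce everything to the classical contracted second Bianchi identity ${\rm div}_g{\rm Rc}=\frac12{\rm d}R$ together with the relation ${\rm div}_\phi={\rm e}^{\phi}\circ{\rm div}_g\circ{\rm e}^{-\phi}$ recorded above, which for a symmetric $(0,2)$-tensor $T$ gives ${\rm div}_\phi T={\rm div}_g T-T(\nabla\phi,\cdot)$. Writing ${\rm Rc}_\phi^m={\rm Rc}+{\rm Hess}_g\phi-\frac1m{\rm d}\phi\otimes{\rm d}\phi$ and applying this term by term, it suffices to compute $\nabla^iT_{ij}$ and the contraction $T(\nabla\phi,\cdot)$ for each of the three pieces.

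First I would handle the Hessian term, which is where the curvature enters. By the Ricci (Bochner) commutation identity $\nabla^i\nabla_i\nabla_j\phi=\nabla_j\Delta_g\phi+R_{jk}\nabla^k\phi$, so ${\rm div}_g({\rm Hess}_g\phi)={\rm d}\Delta_g\phi+{\rm Rc}(\nabla\phi,\cdot)$. The remaining ingredients are the elementary identities $\nabla^i(\nabla_i\phi\,\nabla_j\phi)=\Delta_g\phi\,\nabla_j\phi+\frac12\nabla_j|\nabla\phi|^2$, ${\rm Hess}_g\phi(\nabla\phi,\cdot)=\frac12{\rm d}|\nabla\phi|^2$, and $({\rm d}\phi\otimes{\rm d}\phi)(\nabla\phi,\cdot)=|\nabla\phi|^2\,{\rm d}\phi$, all immediate from the symmetry of the Hessian.

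Assembling these, ${\rm div}_\phi{\rm Rc}_\phi^m=\frac12{\rm d}R+{\rm d}\Delta_g\phi-\frac1m\Delta_g\phi\,{\rm d}\phi-\frac1{2m}{\rm d}|\nabla\phi|^2-\frac12{\rm d}|\nabla\phi|^2+\frac1m|\nabla\phi|^2\,{\rm d}\phi$, the two copies of ${\rm Rc}(\nabla\phi,\cdot)$ having cancelled. Comparing with $\frac12{\rm d}R_\phi^m=\frac12{\rm d}R+{\rm d}\Delta_g\phi-\frac{m+1}{2m}{\rm d}|\nabla\phi|^2$, the coefficients of ${\rm d}|\nabla\phi|^2$ cancel since $-\frac1{2m}-\frac12+\frac{m+1}{2m}=0$, leaving $-\frac1m\bigl(\Delta_g\phi-|\nabla\phi|^2\bigr){\rm d}\phi=-\frac1m\Delta_\phi\phi\,{\rm d}\phi$ by the identity $\Delta_\phi\phi=\Delta_g\phi-|\nabla\phi|^2$. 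This is the asserted formula. The argument is entirely mechanical; the only place needing care is the sign bookkeeping and the correct use of the Bochner commutation formula, so I would carry out that one step explicitly and treat the rest as routine.
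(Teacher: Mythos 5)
Your computation is correct, and each step checks out: the commutation identity ${\rm div}_g({\rm Hess}_g\phi)={\rm d}\Delta_g\phi+{\rm Rc}(\nabla\phi,\cdot)$ carries the curvature term that exactly cancels the $-{\rm Rc}(\nabla\phi,\cdot)$ coming from the weighted divergence of ${\rm Rc}$, the coefficient of ${\rm d}|\nabla\phi|^2$ vanishes as you verify, and the leftover $-\frac1m(\Delta_g\phi-|\nabla\phi|^2)\,{\rm d}\phi$ is $-\frac1m\Delta_\phi\phi\,{\rm d}\phi$ by the paper's formula $\Delta_\phi w=\Delta_g w-\langle\nabla\phi,\nabla w\rangle$. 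Your use of ${\rm div}_\phi T={\rm div}_g T-\iota_{\nabla\phi}T$ for a symmetric $(0,2)$-tensor is also consistent with how the authors use ${\rm div}_\phi={\rm e}^\phi\circ{\rm div}_g\circ{\rm e}^{-\phi}$ elsewhere (e.g., in the proof of Proposition \ref{Rphiconst}). Note that the paper gives no proof of this lemma at all: it is quoted from Case's work (\cite[Proposition~4.2]{Case4}), so there is no in-paper argument to compare against. Your direct term-by-term verification from the contracted Bianchi identity is the natural self-contained proof and is essentially the standard computation; it is a reasonable, complete substitute for the citation.
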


\section{Deformation of the weighted Scalar curvature}\label{sec3}

Throughout this section, we will always assume $\bigl(M,g,{\rm e}^{-\phi}{\rm d}V_g,m\bigr)$ is an $n$-dimensional closed smooth metric measure space $(n\geq3)$ unless otherwise stated.
We study the \textit{weighted scalar curvature map}
\[(g,\phi)\mapsto \mathcal{R}(g,\phi):=R+2\Delta \phi -\frac{m+1}{m} |\nabla \phi |^2.\]
We first compute the linearization of this map and its adjoint.
To this end, we let $\mathcal{M}$ be the space of Riemannian metrics on $M$.
The following formula is known.

\begin{Lemma}\label{lem3.1}
Let $k$ be a nonnegative integer, and let $l>\frac{n}{p}+2$. The map $\mathcal{R}$ as a map $\mathcal{R}\colon\bigl(C^{k+2,\alpha}\cap \mathcal{M}\bigr)\times C^{k+2,\alpha}\to C^{k,\alpha}$, or $\mathcal{R}\colon\bigl(W^{l,p}\cap \mathcal{M}\bigr)\times W^{l,p}\to W^{l-2,p}$ is smooth. The linearization of $R_\phi^m$ is given by
\begin{gather*}
\mathcal{DR}_{g,\phi}(h,\psi):=\left.\frac{{\rm d}}{{\rm d}t}\right|_{t=0}R_{\phi+t\psi}^m(g+th)\\
\hphantom{\mathcal{DR}_{g,\phi}(h,\psi):}={\rm div}_\phi {\rm div}_\phi h-\bigr\langle h,{\rm Rc}_\phi^m\bigl\rangle -\Delta_\phi({\rm tr}_gh)+2\left(\Delta_\phi \psi-\frac{1}{m}\langle {\rm d}\phi,{\rm d}\psi\rangle\right).
\end{gather*}
\end{Lemma}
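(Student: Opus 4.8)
The plan is to compute $\mathcal{DR}_{g,\phi}$ directly from the classical first-variation formulas for the unweighted curvature quantities and then rewrite the result in terms of the weighted operators $\Delta_\phi$, ${\rm div}_\phi$ and the Bakry--\'{E}mery Ricci tensor ${\rm Rc}_\phi^m$. The smoothness assertion needs no real work: $\mathcal{R}$ is a second-order quasilinear differential operator in $(g,\phi)$ whose coefficients depend real-analytically on $g$ and $g^{-1}$ and polynomially on the first derivatives of $g$ and $\phi$. Since $g\mapsto g^{-1}$ is real-analytic on the open cone of positive-definite symmetric $2$-tensors, and since $C^{k+2,\alpha}$ (resp.\ $W^{l,p}$ with $l>\frac{n}{p}+2$) is a Banach algebra on which the pointwise products occurring in $\mathcal{R}$ are bounded multilinear maps into $C^{k,\alpha}$ (resp.\ $W^{l-2,p}$), smoothness follows by composition, by the same reasoning as in \cite{Fischer&Marsden}.

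For the linearization, put $g_t=g+th$ and $\phi_t=\phi+t\psi$, and differentiate $R_{\phi_t}^m(g_t)=R(g_t)+2\Delta_{g_t}\phi_t-\frac{m+1}{m}|\nabla\phi_t|_{g_t}^2$ in $t$ at $t=0$. I will use three standard identities. First, the Fischer--Marsden formula: the $t$-derivative of $R(g_t)$ at $t=0$ equals $-\Delta_g({\rm tr}_gh)+{\rm div}_g{\rm div}_gh-\langle h,{\rm Rc}\rangle$. Second, for a fixed function $w$, the $t$-derivative of $\Delta_{g_t}w$ at $t=0$ equals $-\langle h,{\rm Hess}_gw\rangle-\langle{\rm div}_gh,{\rm d}w\rangle+\frac{1}{2}\langle{\rm d}({\rm tr}_gh),{\rm d}w\rangle$; this follows from $\Delta_gw=\frac{1}{\sqrt{\det g}}\partial_i\big(\sqrt{\det g}\,g^{ij}\partial_jw\big)$ together with the facts that the $t$-derivatives at $0$ of $g_t^{ij}$ and of $\sqrt{\det g_t}$ are $-h^{ij}$ and $\frac{1}{2}\sqrt{\det g}\,{\rm tr}_gh$ respectively. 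Third, the $t$-derivative of $|\nabla\phi_t|_{g_t}^2$ at $t=0$ equals $-h(\nabla\phi,\nabla\phi)+2\langle\nabla\phi,\nabla\psi\rangle$. Applying the Leibniz rule to $2\Delta_{g_t}\phi_t$ produces the second identity with $w=\phi$ together with an extra term $2\Delta_g\psi$, and to $\frac{m+1}{m}|\nabla\phi_t|_{g_t}^2$ produces the third; adding these contributions gives a formula for $\mathcal{DR}_{g,\phi}(h,\psi)$ in terms of the Levi-Civita operators of $g$.

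It remains to match this with the claimed expression. For this I would expand the claimed right-hand side using $\Delta_\phi w=\Delta_gw-\langle\nabla\phi,\nabla w\rangle$, ${\rm Rc}_\phi^m={\rm Rc}+{\rm Hess}_g\phi-\frac{1}{m}{\rm d}\phi\otimes{\rm d}\phi$, and ${\rm div}_\phi\omega={\rm div}_g\omega-\iota_{\nabla\phi}\omega$ applied twice in ${\rm div}_\phi{\rm div}_\phi h$. The latter generates precisely the cross terms $-2\langle{\rm div}_gh,{\rm d}\phi\rangle$, $-\langle h,{\rm Hess}_g\phi\rangle$ and $h(\nabla\phi,\nabla\phi)$, where one uses ${\rm div}_g(\iota_{\nabla\phi}h)=\langle{\rm div}_gh,{\rm d}\phi\rangle+\langle h,{\rm Hess}_g\phi\rangle$. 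A term-by-term comparison then closes the proof; in particular the two contributions proportional to $h(\nabla\phi,\nabla\phi)$ combine with coefficient $1+\frac{1}{m}=\frac{m+1}{m}$, and those proportional to $\langle{\rm d}\phi,{\rm d}\psi\rangle$ with coefficient $-\frac{2(m+1)}{m}$. The only point demanding care is this bookkeeping of the $\nabla\phi$-cross terms --- in particular keeping the divergence and codifferential sign conventions consistent with Lemma~\ref{bianchi} --- and no idea beyond the three variation formulas above is needed.
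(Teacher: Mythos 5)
Your proposal is correct, but it is worth noting that the paper does not actually prove this lemma at all: its ``proof'' is the single line ``See, for example, [Case, Prop.\ 5.1]'', deferring to Case's computation of the linearizations of the weighted $\sigma_k$-curvatures. Your argument is therefore a genuinely different (and self-contained) route: a direct first variation using the classical Fischer--Marsden formula for $\dot R$, the standard variation of the Laplacian, and the variation of $|\nabla\phi_t|_{g_t}^2$, followed by re-expression in the weighted operators. I checked the bookkeeping and it closes: expanding the claimed right-hand side gives ${\rm div}_\phi{\rm div}_\phi h={\rm div}_g{\rm div}_gh-2\langle{\rm div}_gh,{\rm d}\phi\rangle-\langle h,{\rm Hess}_g\phi\rangle+h(\nabla\phi,\nabla\phi)$, the term $-\langle h,{\rm Rc}_\phi^m\rangle$ contributes a second $-\langle h,{\rm Hess}_g\phi\rangle$ and $+\frac{1}{m}h(\nabla\phi,\nabla\phi)$, and $-\Delta_\phi({\rm tr}_gh)$ contributes $+\langle{\rm d}({\rm tr}_gh),{\rm d}\phi\rangle$; the totals $\bigl(1+\frac{1}{m}\bigr)h(\nabla\phi,\nabla\phi)=\frac{m+1}{m}h(\nabla\phi,\nabla\phi)$ and $\bigl(-2-\frac{2}{m}\bigr)\langle{\rm d}\phi,{\rm d}\psi\rangle=-\frac{2(m+1)}{m}\langle{\rm d}\phi,{\rm d}\psi\rangle$ match exactly what the unweighted variation produces, with the convention $({\rm div}_gh)_l=\nabla^jh_{jl}$ consistent with Lemma~\ref{bianchi}. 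The smoothness assertion is handled the same way Fischer--Marsden handle it, and the hypothesis $l>\frac{n}{p}+2$ is precisely what makes the multiplication arguments work in $W^{l-2,p}$. What the citation buys the authors is brevity and consistency with Case's sign conventions for ${\rm div}_\phi$; what your computation buys is an independent verification of the coefficients, which is valuable since the weighted cross terms are exactly where a sign error would hide.
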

\begin{proof}
See, for example, \cite[Proposition 5.1]{Case2}.
\end{proof}

The formal $L^2_\phi$-adjoint $\mathcal{DR}_{g,\phi}^*$ of $\mathcal{DR}_{g,\phi}$ is defined as
\[
\int_M f \mathcal{DR}_{g,\phi}(h,\psi) {\rm e}^{-\phi}{\rm d}V_g=\int_M \bigl\langle \mathcal{DR}_{g,\phi}^*(f), (h,\psi)\bigr\rangle {\rm e}^{-\phi}{\rm d}V_g.
\]
Using \eqref{IBP} and Lemma \ref{lem3.1}, we can compute $\mathcal{DR}_{g,\phi}^*$ as follows:
\begin{equation*}
 \begin{split}
 \mathcal{DR}^*_{g,\phi}(f)=\left(-(\Delta_\phi f)g+{\rm Hess}_gf-f{\rm Rc}_\phi^m,2\Delta_\phi f+\frac{2}{m}\bigl(\langle {\rm d}f,{\rm d}\phi\rangle+f\Delta_\phi\phi\bigr)\right).
 \end{split}
\end{equation*}
Note that the adjoint operator $\mathcal{DR}_{g,\phi}^*$ is overdetermined-elliptic, i.e., it has injective symbol. Indeed, the principal symbol of $\mathcal{DR}^*$ is given by
\begin{equation*}
 \sigma(\mathcal{DR}_{g,\phi}^*)_x(\epsilon)f=\left(\bigl(\|\epsilon\|^2g-\epsilon\otimes \epsilon\bigr)f,-\frac{2}{m}\|\epsilon\|^2f\right).
\end{equation*}
Then one can easily see that it is injective for $\epsilon\neq0$. Since $\mathcal{DR}_{(g,\phi)}^*$ is overdetermined-elliptic, the operator $\mathcal{DR}_{(g,\phi)}\mathcal{DR}^*_{(g,\phi)}$ is strictly elliptic. Thus the kernel of this operator is composed of smooth fields (assuming that $g$ and $\phi$ are smooth) by elliptic regularity, and on a closed manifold, this kernel is the same as the kernel of the adjoint $\mathcal{DR}_{(g,\phi)}^*$, via the identity
\[
\int_M f\mathcal{DR}_{(g,\phi)}\mathcal{DR}_{(g,\phi)}^*(f){\rm e}^{-\phi}{\rm d}V_g=\int_M \bigl|\mathcal{DR}_{(g,\phi)}^*(f)\bigr|^2{\rm e}^{-\phi}{\rm d}V_g.
\]
 A similar computation shows that the image of $\mathcal{DR}_{(g,\phi)}$ is $L^2_\phi$-orthogonal to the kernel of $\mathcal{DR}_{(g,\phi)}^*$.

\begin{Definition}\label{def_wstatic}
We say that the smooth metric measure space (not necessarily compact) $\bigl(M,g,{\rm e}^{-\phi}{\rm d}V_g,m\bigr)$ is a \textit{weighted vacuum static space} if there exists a smooth function $f\not\equiv 0$ such that
\begin{equation}\label{wstatic}
 -(\Delta_\phi f)g+{\rm Hess}_gf-f{\rm Rc}_\phi^m=0,\qquad\textrm{ and } \qquad \Delta_\phi f+\frac{1}{m}\bigl(\langle {\rm d}f,{\rm d}\phi\rangle+f\Delta_\phi\phi\bigr)=0.
\end{equation}
We say that $\bigl(M,g,{\rm e}^{-\phi}{\rm d}V_g,m\bigr)$ is a \textit{nontrivial weighted vacuum static space}
if there exists $f$ satisfying (\ref{wstatic}) which is not constant.
\end{Definition}

Note that $f$ satisfies (\ref{wstatic}) if and only if
$f$ lies in the kernel of $\mathcal{DR}^*_{g,\phi}$.
Hence, $\bigl(M,g,{\rm e}^{-\phi}{\rm d}V_g,m\bigr)$ is a weighted vacuum static space
if and only if the kernel of $\mathcal{DR}^*_{g,\phi}$ is nontrivial.

Taking the trace of the first equation in \eqref{wstatic} yields
\begin{equation*}
 (1-n)\Delta_\phi f+f\Delta_\phi\phi+\langle \nabla f,\nabla \phi\rangle -fR_\phi^m=0.
\end{equation*}
Combining this with the second equation in \eqref{wstatic}, we have
\begin{equation}\label{wstatic_del}
 \Delta_\phi f=-\frac{f R_\phi^m}{n+m-1}.
\end{equation}
So the first equation in \eqref{wstatic} can be written as
\begin{equation}\label{wstatic2}
{\rm Hess}_gf=f\left({\rm Rc}_\phi^m-\frac{R_\phi^m}{n+m-1}g\right).
\end{equation}

By analyzing the equations satisfied by a weighted vacuum static space,
we obtain some results. First, we have the following.

\begin{Proposition}\label{Rphiconst}
 Let $\bigl(M,g,{\rm e}^{-\phi}{\rm d}V_g,m\bigr)$ be a connected closed weighted vacuum static space. Then
 the weighted scalar curvature $R_\phi^m$ must be constant.
\end{Proposition}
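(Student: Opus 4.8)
The plan is to derive a differential identity forcing $R_\phi^m$ to be constant by combining the weighted static equations \eqref{wstatic2} and \eqref{wstatic_del} with the weighted contracted Bianchi identity from Lemma \ref{bianchi}. First I would take the weighted divergence of the tensor equation \eqref{wstatic2}. On the left, ${\rm div}_\phi {\rm Hess}_g f$ can be expressed (via the Bochner-type commutation formula in the weighted setting) in terms of ${\rm d}\Delta_\phi f$, a Ricci-curvature contraction ${\rm Rc}(\nabla f,\cdot)$, and lower-order $\phi$-terms; using ${\rm Rc}_\phi^m$ one rewrites ${\rm Rc}(\nabla f,\cdot)$ so that everything is phrased in Bakry--Émery quantities. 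On the right, I would apply ${\rm div}_\phi$ to $f\bigl({\rm Rc}_\phi^m-\tfrac{R_\phi^m}{n+m-1}g\bigr)$ using the Leibniz rule for ${\rm div}_\phi$, producing a term $f\,{\rm div}_\phi {\rm Rc}_\phi^m$ — which Lemma \ref{bianchi} replaces by $\tfrac{f}{2}{\rm d}R_\phi^m-\tfrac{f}{m}\Delta_\phi\phi\,{\rm d}\phi$ — together with ${\rm Rc}_\phi^m(\nabla f,\cdot)$, a gradient term $-\tfrac{1}{n+m-1}\bigl(({\rm d}f)R_\phi^m+f\,{\rm d}R_\phi^m\bigr)$, and the commutator term from ${\rm div}_\phi(wg)={\rm d}w - \iota_{\nabla\phi}(wg)$ applied appropriately.

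The key step is then to substitute \eqref{wstatic_del}, i.e.\ $\Delta_\phi f = -\tfrac{fR_\phi^m}{n+m-1}$, into the left-hand side so that the ${\rm d}\Delta_\phi f$ contribution also becomes an expression in $f$, ${\rm d}f$, $R_\phi^m$, and ${\rm d}R_\phi^m$. After this substitution I expect the ${\rm Rc}_\phi^m(\nabla f,\cdot)$ terms on both sides to cancel, along with all the $\Delta_\phi\phi$ and $\langle {\rm d}f,{\rm d}\phi\rangle$ contributions (the second equation of \eqref{wstatic} should be used once more here to eliminate the remaining $\Delta_\phi\phi$ term). What should survive is a clean relation of the schematic form
\[
\left(\text{const}\right) f\,{\rm d}R_\phi^m = \left(\text{const}\right) R_\phi^m\,{\rm d}f,
\]
with both constants nonzero because $n\geq 3$ and $m>0$; concretely I anticipate something like $\tfrac{n-1}{2}\,f\,{\rm d}R_\phi^m$ on one side. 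Equivalently, $R_\phi^m\,{\rm d}f$ and $f\,{\rm d}R_\phi^m$ are proportional with a fixed ratio, which says $\nabla\bigl(R_\phi^m/f^{a}\bigr)=0$ for a suitable exponent $a$ on the open set $\{f\neq 0\}$.

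To conclude, I would argue that $R_\phi^m$ is constant on $M$. On the set $U=\{f\neq 0\}$ the identity above shows $R_\phi^m = c\,f^{a}$ locally, but one can do better: if $f$ is nonconstant, evaluating the surviving identity at an interior critical point of $f$ (which exists on the closed manifold $M$) where ${\rm d}f=0$ forces $f\,{\rm d}R_\phi^m=0$ there; a connectedness/continuity argument, together with the fact that $R_\phi^m$ solves the elliptic equation obtained by tracing (or directly by the unique continuation property of the overdetermined-elliptic system, already noted in the text), then propagates constancy of $R_\phi^m$ across all of $M$. If instead $f$ is constant, then \eqref{wstatic2} gives ${\rm Rc}_\phi^m=\tfrac{R_\phi^m}{n+m-1}g$ and \eqref{wstatic_del} gives $\Delta_\phi\phi$ in terms of $fR_\phi^m$; feeding these into Lemma \ref{bianchi} directly yields ${\rm d}R_\phi^m=0$. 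In all cases $R_\phi^m$ is constant on the connected closed manifold, which is the assertion.

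The main obstacle I foresee is purely computational bookkeeping: the weighted divergence carries extra $\iota_{\nabla\phi}$ corrections compared with the classical Fischer--Marsden computation, and one must be careful that the $\tfrac{1}{m}\Delta_\phi\phi\,{\rm d}\phi$ term in Lemma \ref{bianchi} and the $\tfrac{1}{m}$-terms appearing when differentiating ${\rm Rc}_\phi^m = {\rm Rc}+{\rm Hess}\,\phi-\tfrac1m {\rm d}\phi\otimes{\rm d}\phi$ conspire to cancel exactly. The second static equation in \eqref{wstatic} is precisely the ingredient that makes this cancellation work, so tracking where it enters is the delicate point; once the $\phi$-dependent terms are shown to drop out, the argument reduces to the classical one.
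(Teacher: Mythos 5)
Your overall strategy --- take the weighted divergence of the static equation, invoke Lemma \ref{bianchi} for ${\rm div}_\phi{\rm Rc}_\phi^m$, and use the second equation of \eqref{wstatic} to cancel the $\phi$-dependent terms --- is exactly the paper's computation, and the cancellations you worry about do work out. However, the identity you anticipate is wrong in a way that derails your endgame. Carrying out the divergence of \eqref{wstatic2} (or of the first equation of \eqref{wstatic}), the contribution $-\tfrac{1}{n+m-1}\bigl(R_\phi^m\,{\rm d}f+f\,{\rm d}R_\phi^m\bigr)$ coming from ${\rm d}\Delta_\phi f$ via \eqref{wstatic_del} cancels \emph{identically} against the same expression produced by ${\rm div}_\phi\bigl(\tfrac{R_\phi^m}{n+m-1}fg\bigr)$ on the other side, and what survives after applying the second static equation is simply $f\,{\rm d}R_\phi^m=0$: no $R_\phi^m\,{\rm d}f$ term with a nonzero coefficient remains. (Had your anticipated identity $c_1 f\,{\rm d}R_\phi^m=c_2R_\phi^m\,{\rm d}f$ with $c_2\neq0$ been correct, it would force $R_\phi^m\,{\rm d}f\equiv0$ once constancy of $R_\phi^m$ is known, which is false for nontrivial weighted static spaces with $R_\phi^m>0$.) Consequently the ``$R_\phi^m=c\,f^{a}$ on $\{f\neq0\}$'' step and the evaluation at a critical point of $f$ carry no content --- the identity $f\,{\rm d}R_\phi^m=0$ holds everywhere and says nothing new at points where ${\rm d}f=0$.

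The genuine gap is the concluding step. From $f\,{\rm d}R_\phi^m=0$ you obtain ${\rm d}R_\phi^m=0$ only on $\{f\neq0\}$, so you must rule out that $f$ vanishes on an open set. Your appeal to a ``unique continuation property of the overdetermined-elliptic system'' is not established in the paper and is not what is needed; the paper's argument is elementary: if $f(x_0)=0$ and ${\rm d}f(x_0)=0$, then along any geodesic $\gamma$ from $x_0$ the function $h(t)=f(\gamma(t))$ satisfies, by \eqref{wstatic2}, the linear second-order ODE $h''=\bigl({\rm Rc}_\phi^m-\tfrac{R_\phi^m}{n+m-1}g\bigr)(\gamma',\gamma')\,h$ with $h(0)=h'(0)=0$, hence $h\equiv0$, and by completeness $f\equiv0$, contradicting nontriviality. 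Therefore ${\rm d}f\neq0$ on $f^{-1}(0)$, so $f^{-1}(0)$ is a hypersurface, $\{f\neq0\}$ is open and dense, and ${\rm d}R_\phi^m=0$ everywhere by continuity. You need to supply this (or an equivalent) density argument to close the proof; your treatment of the constant-$f$ case is fine.
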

\begin{proof}
 Note that if $f$ is a non-zero constant, then there is nothing to prove. So we assume that $\bigl(M,g,{\rm e}^{-\phi}{\rm d}V_g,m\bigr)$ is nontrivial. By direct calculation and using Lemma \ref{bianchi}, we get the following:
 \begin{gather*}
 {\rm div}_\phi(\Delta_\phi fg)_i=\nabla_i\Delta_\phi f-(\Delta_\phi f)\nabla_i \phi,\\
 {\rm div}_\phi({\rm Hess}_gf)_i=\nabla_i\Delta_\phi f+\bigl({\rm Rc}_\phi^m\bigr)_{il}\nabla^lf+\frac{1}{m}\langle \nabla \phi,\nabla f\rangle \nabla_i\phi,\\
 {\rm div}_\phi\bigl(f{\rm Rc}_\phi^m\bigr)_i=\bigl({\rm Rc}_\phi^m\bigr)_{il}\nabla^lf+\frac{1}{2}f \nabla_iR_\phi^m-\frac{1}{m}f(\Delta_\phi\phi) \nabla_i\phi.
 \end{gather*}
Thus, taking the weighted divergence of the first equation in \eqref{wstatic} yields
\begin{gather*}
 {\rm div}_\phi\bigl(-(\Delta_\phi f)g+{\rm Hess}_gf-f{\rm Rc}_\phi^m\bigr)_i
 =\left[\Delta_\phi f+\frac{1}{m}\bigl(\langle \nabla \phi,\nabla f\rangle+f\Delta_\phi\phi\bigr)\right]\nabla_i\phi-\frac{1}{2}f\nabla_iR_\phi^m.
\end{gather*}
Then, by the second equation in \eqref{wstatic}, it reduces to
\begin{equation*}
 f{\rm d}R_\phi^m=0.
\end{equation*}
If $f$ is never zero, $R_\phi^m$ must be constant. On the other hand, assume there is some $x_0\in M$ with $f(x_0)=0$. Then we must have ${\rm d}f(x_0)\neq0$. To see this, assume ${\rm d}f(x_0)=0$, let $\gamma(t)$ be a geodesic starting at $x$, and let $h(t)=f\bigl(\gamma(t)\bigr)$. It follows from \eqref{wstatic2}
that $h(t)$ satisfies the following linear second order differential equation
\begin{gather*}
 h''(t)=({\rm Hess}_gf)_{\gamma(t)}\cdot\bigl(\gamma'(t),\gamma'(t)\bigr)
 =\left\{\left({\rm Rc}_\phi^m-\frac{R_\phi^m}{n+m-1}g\right)\cdot\bigl(\gamma'(t),\gamma'(t)\bigr)\right\}h(t)
\end{gather*}
with $h(0)=f(x_0)=0$ and $h'(0)={\rm d}f(\gamma(0))\cdot \gamma'(0)=0$. This implies that $h(t)\equiv 0$. Thus, $f$~is zero along $\gamma(t)$, and by the Hopf--Rinow theorem $f$ vanishes in $M$, which contradicts the assumption that $f$ is not constant.
 Thus ${\rm d}f$ cannot vanish on $f^{-1}(0)$, and $0$ is a regular value of $f$,
 which implies that $f^{-1}(0)$ is an $(n-1)$-dimensional submanifold of $M$. Hence, ${\rm d}R_\phi^m=0$ on an open dense set and hence everywhere in $M$.
\end{proof}

Now are ready to prove Proposition \ref{propRneg}.

\begin{proof}[Proof of Proposition \ref{propRneg}]
It is well known that if a closed smooth metric measure space satisfies ${\rm Rc}_\phi^m=0$, then it is Ricci-flat and $\phi$ is constant \cite{KimKim}. So we can assume that $\bigl(M,g,{\rm e}^{-\phi}{\rm d}V_g,m\bigr)$ is a nontrivial weighted vacuum static space.
Then it follows from Proposition \ref{Rphiconst} that
the weighted scalar curvature $R_\phi^m$ is constant, while \eqref{wstatic_del} implies that \smash{$-\Delta_\phi-\frac{R_\phi^m}{m+n-1}$} has nontrivial kernel. Thus $R_\phi^m\geq0$. Moreover, if $R_\phi^m=0$, then $f\equiv c$ for some nonzero constant $c$. But then~\eqref{wstatic} gives ${\rm Rc}_\phi^m=0$ which implies that $\bigl(M,g,{\rm e}^{-\phi}{\rm d}V_g,m\bigr)$ is Ricci-flat and $\phi$ is constant.
\end{proof}

The following corollary is a direct consequence of Proposition \ref{propRneg}.

\begin{Corollary}\label{cor4.2}
Let $\bigl(M,g,{\rm e}^{-\phi}{\rm d}V_g,m\bigr)$ be a connected closed smooth metric measure space
such that its weighted scalar curvature $R_\phi^m$ is zero
and $(M,g)$ is not Ricci-flat.
Then $\bigl(M,g,{\rm e}^{-\phi}{\rm d}V_g,m\bigr)$
is not a weighted vacuum static space.
\end{Corollary}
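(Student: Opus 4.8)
The plan is to argue by contradiction using Proposition~\ref{propRneg}. Suppose, contrary to the assertion, that $\bigl(M,g,{\rm e}^{-\phi}{\rm d}V_g,m\bigr)$ is a weighted vacuum static space. Since it is connected and closed, Proposition~\ref{propRneg} applies and yields a dichotomy: either $(M,g)$ is isometric to a Ricci-flat manifold with $\phi$ constant, or the weighted scalar curvature $R_\phi^m$ is a positive constant. The first alternative is ruled out by the hypothesis that $(M,g)$ is not Ricci-flat, and the second is ruled out by the hypothesis that $R_\phi^m\equiv 0$. Hence neither branch of the dichotomy can occur, which is absurd; therefore $\bigl(M,g,{\rm e}^{-\phi}{\rm d}V_g,m\bigr)$ cannot be a weighted vacuum static space.

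There is essentially no obstacle here: all the content is already packed into Proposition~\ref{propRneg}, and the only thing to check is that both branches of its conclusion are genuinely excluded by the two standing hypotheses, which is immediate. Equivalently, one may phrase the argument positively: Proposition~\ref{propRneg} shows that every closed weighted vacuum static space has nonnegative constant weighted scalar curvature, and that the vanishing case forces the space to be Ricci-flat with $\phi$ constant; since we are given $R_\phi^m=0$ while $(M,g)$ is assumed not to be Ricci-flat, the space in question falls outside the class of weighted vacuum static spaces, which is exactly the claim.
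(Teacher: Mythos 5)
Your proof is correct and is exactly the argument the paper intends: the corollary is stated there as "a direct consequence of Proposition \ref{propRneg}", and your contradiction argument simply spells out that both branches of that proposition's dichotomy are excluded by the hypotheses $R_\phi^m=0$ and $(M,g)$ not Ricci-flat. Nothing is missing.
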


By implicit function theorem, we have the following.
\begin{Proposition}\label{prop3.6}
 Let $\bigl(M,g,{\rm e}^{-\phi}{\rm d}V_g,m\bigr)$ be a connected closed smooth metric measure space. Assume that either $(i)$ $R_\phi^m$ is not equal to $\lambda(n+m-1)$ where $\lambda\in spec(-\Delta_\phi)$, or $(ii)$ $R_\phi^m=0$ and ${\rm Rc}_\phi^m\not\equiv 0$. Then $\mathcal{DR}_{(g,\phi)}$ is surjective, its kernel splits and $\mathcal{R}$ maps any neighborhood of~$(g,\phi)$ onto a neighborhood of $R_\phi^m$.
\end{Proposition}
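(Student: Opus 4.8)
The plan is to invoke the standard implicit function theorem argument à la Fischer–Marsden, now in the weighted setting. Since $\mathcal{DR}_{(g,\phi)}\colon C^{k+2,\alpha}(S_2 T^*M)\times C^{k+2,\alpha}(M)\to C^{k,\alpha}(M)$ is a bounded linear map with closed range (because its adjoint $\mathcal{DR}^*_{(g,\phi)}$ is overdetermined-elliptic, so $\mathcal{DR}_{(g,\phi)}\mathcal{DR}^*_{(g,\phi)}$ is strictly elliptic, as noted in Section~\ref{sec3}), surjectivity of $\mathcal{DR}_{(g,\phi)}$ is equivalent to $\ker\mathcal{DR}^*_{(g,\phi)}=\{0\}$; that is, it is equivalent to $\bigl(M,g,{\rm e}^{-\phi}{\rm d}V_g,m\bigr)$ \emph{not} being a weighted vacuum static space. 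So the first step is: show that under hypothesis (i) or (ii), the system \eqref{wstatic} has only the trivial solution $f\equiv 0$.

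For this reduction I would argue as follows. Suppose $f\in\ker\mathcal{DR}^*_{(g,\phi)}$, i.e.\ $f$ satisfies \eqref{wstatic}. By Proposition~\ref{Rphiconst}, $R_\phi^m$ is a constant, and by \eqref{wstatic_del} we have $\Delta_\phi f=-\frac{R_\phi^m}{n+m-1}f$, so $f$ is an eigenfunction of $-\Delta_\phi$ with eigenvalue $\lambda:=\frac{R_\phi^m}{n+m-1}$ (or $f\equiv 0$). Under hypothesis (i), $R_\phi^m\ne\lambda(n+m-1)$ for every $\lambda\in\operatorname{spec}(-\Delta_\phi)$, which forces $f\equiv 0$ — here one uses that the operator $\Delta_\phi$ is self-adjoint on $L^2\bigl({\rm e}^{-\phi}{\rm d}V_g\bigr)$ on a closed manifold, with discrete spectrum, and $0\in\operatorname{spec}(-\Delta_\phi)$ so the constant case $R_\phi^m=0$ is already excluded by (i). Under hypothesis (ii), $R_\phi^m=0$ gives $\Delta_\phi f=0$, hence $f$ is constant; plugging a nonzero constant $f$ back into the first equation of \eqref{wstatic} yields $f\,{\rm Rc}_\phi^m=0$, so ${\rm Rc}_\phi^m\equiv 0$, contradicting the assumption ${\rm Rc}_\phi^m\not\equiv 0$. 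Hence $f\equiv 0$ in both cases, so $\ker\mathcal{DR}^*_{(g,\phi)}=\{0\}$ and $\mathcal{DR}_{(g,\phi)}$ is surjective.

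The remaining steps are the soft functional-analytic ones. Because $\mathcal{DR}_{(g,\phi)}$ is a bounded surjection between Banach spaces (in the $C^{k+2,\alpha}$ or $W^{l,p}$ scales of Lemma~\ref{lem3.1}) with closed, complemented kernel — the kernel splits since $\mathcal{DR}_{(g,\phi)}$ has a bounded right inverse, e.g.\ via $\mathcal{DR}^*_{(g,\phi)}\bigl(\mathcal{DR}_{(g,\phi)}\mathcal{DR}^*_{(g,\phi)}\bigr)^{-1}$, the last operator being invertible by strict ellipticity together with the triviality of its kernel established above — the smoothness of $\mathcal R$ from Lemma~\ref{lem3.1} lets us apply the implicit function theorem (equivalently, the Banach-space submersion theorem) to conclude that $\mathcal R$ is an open map near $(g,\phi)$; that is, it maps every neighborhood of $(g,\phi)$ onto a neighborhood of $\mathcal R(g,\phi)=R_\phi^m$.

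I expect the genuinely substantive point to be the spectral dichotomy in Step~2, in particular handling the borderline eigenvalue and correctly matching the normalization $R_\phi^m=\lambda(n+m-1)$ with \eqref{wstatic_del}; the surjectivity-via-closed-range and the splitting/implicit-function-theorem machinery are by now routine and essentially identical to \cite{Fischer&Marsden}, once one knows the overdetermined-ellipticity of $\mathcal{DR}^*_{(g,\phi)}$ recorded in Section~\ref{sec3}. One should also take a moment to confirm that the constant function lies in $\ker\mathcal{DR}^*_{(g,\phi)}$ precisely when ${\rm Rc}_\phi^m\equiv 0$ and $R_\phi^m=0$, so that case (ii) is exactly the complement (among $R_\phi^m=0$ spaces) of the weighted vacuum static ones — this is what makes the two hypotheses jointly exhaustive of the non-static situation.
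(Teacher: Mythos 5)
Your proposal is correct and follows essentially the same route as the paper: reduce surjectivity and splitting to injectivity of $\mathcal{DR}^*_{(g,\phi)}$ via its overdetermined-ellipticity (the paper cites the Berger--Ebin splitting theorem, you construct the right inverse $\mathcal{DR}^*_{(g,\phi)}\bigl(\mathcal{DR}_{(g,\phi)}\mathcal{DR}^*_{(g,\phi)}\bigr)^{-1}$, which amounts to the same thing), then conclude by the implicit function theorem. Your spectral dichotomy for cases (i) and (ii) is exactly the content the paper compresses into the single sentence ``Injectivity of $\mathcal{DR}^*_{(g,\phi)}$ follows from Proposition~\ref{propRneg}''; you have merely made the appeal to Proposition~\ref{Rphiconst} and equation~\eqref{wstatic_del} explicit.
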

\begin{proof}
 Since $\mathcal{DR}_{(g,\phi)}^*$ has injective symbol, by Berger--Ebin splitting theorem \cite{Berger&Ebin}, it suffices to show that $\mathcal{DR}_{(g,\phi)}^*$ is injective, for then $\mathcal{DR}_{(g,\phi)}$ will be surjective and its kernel will have a closed complement, namely \smash{$\mbox{Im}\bigl(\mathcal{DR}_{(g,\phi)}^*\bigr)$,} the image of $\mathcal{DR}_{(g,\phi)}^*$. Injectivity of $\mathcal{DR}_{(g,\phi)}^*$ follows from Proposition \ref{propRneg}. The local surjectivity of $\mathcal{R}$ then follows by the implicit function theorem.
\end{proof}

We have the following example of
 nontrivial weighted vacuum static space.

\begin{Example}
 Consider $\bigl(\mathbb{R}^n,g_0,{\rm e}^{-\phi}{\rm d}V_{g_0},m\bigr)$, where $g_0$ is the flat metric in $\mathbb{R}^n=\{(x_1,\dots,x_n)\mid \allowbreak x_i\in\mathbb{R}\}$. Suppose that $f=f(x_1)$ and $\phi=\phi(x_1)$. Then \eqref{wstatic} can be reduced to the following two ODEs:
 \begin{gather}\label{rnode}
 \phi'f'-f\phi''+\frac{1}{m}f\bigl(\phi'\bigr)^2=0,\\
f''-\frac{m-1}{m}\phi'f'+\frac{1}{m}f\phi''-\frac{1}{m}f\bigl(\phi'\bigr)^2=0.
 \end{gather}
Then one can easily see that $(f,\phi)=\bigl({\rm e}^{-\frac{1}{m}x_1},x_1\bigr)$ satisfies \eqref{rnode}. So $\bigl(\mathbb{R}^n,g_0,{\rm e}^{-x_1}{\rm d}V_{g_0},m\bigr)$ is a nontrivial weighted vacuum static space with \smash{$f={\rm e}^{-\frac{1}{m}x_1}$} and the weighted scalar curvature $R_\phi^m=-\frac{m+1}{m}$.
In particular, this example shows that the assumption that $M$ is compact is needed in Proposition \ref{propRneg}.
\end{Example}

The following proposition gives an existence result
for prescribing the weighted scalar curvature.

\begin{Proposition}\label{prop4.2}
Let $\bigl(M,g,{\rm e}^{-\phi}{\rm d}V_g,m\bigr)$ be a closed smooth metric measure space
such that its weighted scalar curvature $R_\phi^m$ is zero
and $(M,g)$ is not Ricci-flat.
For any $f\in C^\infty(M)$,
there exists a
smooth metric measure space \smash{
$\bigl(M,\overline{g},{\rm e}^{-\overline{\phi}}{\rm d}V_{\overline{g}},m\bigr)$} such that
its weighted scalar curvature~$R_{\overline{\phi}}^m$ is equal to $f$.
\end{Proposition}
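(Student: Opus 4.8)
The plan is to obtain Proposition \ref{prop4.2} as a consequence of Proposition \ref{prop3.6} together with a constant-rescaling argument, following the strategy Fischer and Marsden use to prescribe arbitrary scalar curvatures near a non-Ricci-flat metric of zero scalar curvature.

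First I would check that $\bigl(M,g,{\rm e}^{-\phi}{\rm d}V_g,m\bigr)$ falls under case $(ii)$ of Proposition \ref{prop3.6}. We are assuming $R_\phi^m=0$; and since $(M,g)$ is not Ricci-flat, the fact recalled in the proof of Proposition \ref{propRneg} — a closed smooth metric measure space with ${\rm Rc}_\phi^m\equiv 0$ is Ricci-flat with $\phi$ constant — forces ${\rm Rc}_\phi^m\not\equiv 0$. Hence Proposition \ref{prop3.6} applies, and $\mathcal{R}$ carries every neighborhood of $(g,\phi)$ onto a neighborhood of $R_\phi^m=0$. In particular there is an $\epsilon>0$ such that every smooth function $w$ on $M$ with $\|w\|<\epsilon$, in the relevant Hölder or Sobolev norm of Lemma \ref{lem3.1}, equals the weighted scalar curvature of some smooth metric measure space $\bigl(M,g_w,{\rm e}^{-\phi_w}{\rm d}V_{g_w},m\bigr)$ close to the given one; smoothness of $(g_w,\phi_w)$ for $w$ smooth is the usual elliptic bootstrap, since on the slice transverse to $\ker\mathcal{DR}_{(g,\phi)}$ furnished by ${\rm Im}\bigl(\mathcal{DR}_{(g,\phi)}^*\bigr)$ the equation $\mathcal{R}=w$ is determined elliptic.

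Given now an arbitrary $f\in C^\infty(M)$, I would fix $t>0$ small enough that $\|tf\|<\epsilon$ and apply the previous step with $w=tf$ to get a smooth metric measure space $\bigl(M,g_t,{\rm e}^{-\phi_t}{\rm d}V_{g_t},m\bigr)$ of weighted scalar curvature $tf$. It remains to remove the constant $t$, and this is where the rescaling enters: a direct computation from \eqref{wsc} shows that for $\lambda>0$ constant, with $\phi$ left unchanged, each of $R$, $\Delta_g\phi$, and $|\nabla\phi|^2$ scales by $\lambda^{-1}$ under $g\mapsto\lambda g$, so that $R_\phi^m(\lambda g)=\lambda^{-1}R_\phi^m(g)$; the measure changes only by the positive constant $\lambda^{n/2}$, which affects neither $R_\phi^m$ nor the dimensional parameter $m$. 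Taking $\overline{g}=t\,g_t$ (still a Riemannian metric, as $t>0$) and $\overline{\phi}=\phi_t$ then gives a smooth metric measure space $\bigl(M,\overline{g},{\rm e}^{-\overline{\phi}}{\rm d}V_{\overline{g}},m\bigr)$ with
\[
R_{\overline{\phi}}^m=R_{\phi_t}^m(t\,g_t)=t^{-1}R_{\phi_t}^m(g_t)=t^{-1}(tf)=f,
\]
as required. (If $f\equiv 0$ one may simply take the original space.)

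I do not anticipate a genuine obstacle: the analytic content is entirely inside Proposition \ref{prop3.6}, and the rescaling is elementary. The only points requiring a little care are, first, that the solution furnished by the implicit function theorem is a bona fide smooth metric measure space rather than merely a $C^{k+2,\alpha}$ (or $W^{l,p}$) one — handled by the elliptic regularity just mentioned — and, second, that one should work throughout in a fixed norm for which Proposition \ref{prop3.6} is valid, so that replacing $f$ by $tf$ scales the norm linearly and the smallness requirement $\|tf\|<\epsilon$ can indeed be met by choosing $t$ small.
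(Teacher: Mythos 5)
Your proposal is correct and follows essentially the same route as the paper: verify via Proposition \ref{prop3.6} (the paper passes through Corollary \ref{cor4.2} to get injectivity of $\mathcal{DR}_{(g,\phi)}^*$, which is equivalent to your appeal to case $(ii)$) that $\mathcal{R}$ is locally surjective near $(g,\phi)$, realize a small multiple of $f$, and then remove the constant by the homothety $g\mapsto\lambda g$, under which $R_\phi^m$ scales by $\lambda^{-1}$. Your $\overline{g}=t\,g_t$ is exactly the paper's $c^{-1}\overline{g}$ with $t=1/c$.
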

\begin{proof}\looseness=-1
It follows from Corollary \ref{cor4.2} that
$\bigl(M,g,{\rm e}^{-\phi}{\rm d}V_g,m\bigr)$ is not a weighted vacuum static space.
In particular, $\mathcal{DR}_{(g,\phi)}^*$ is injective. Applying Proposition \ref{prop3.6}, there exists $\epsilon>0$ such that~if%
\begin{equation}\label{condition}
\|\psi\|=\|R_\phi^m-\psi\|<\epsilon,
\end{equation}
then there exists a smooth metric measure space $\bigl(M,\overline{g},{\rm e}^{-\overline{\phi}}{\rm d}V_{\overline{g}},m\bigr)$
such that its weighted scalar curvature $R_{\overline{\phi}}^m=\psi$.
In particular, for any $f\in C^\infty(M)$, we can find a constant $c>0$ such that
\[\frac{\|f\|}{c}<\epsilon,\]
where $\epsilon$ is the number appeared in (\ref{condition}).
It follows from (\ref{condition}) that
there exists a smooth metric measure space $\bigl(M,\overline{g},{\rm e}^{-\overline{\phi}}{\rm d}V_{\overline{g}},m\bigr)$
such that its weighted scalar curvature \smash{$R_{\overline{\phi}}^m=\frac{f}{c}$.}
Hence, the smooth metric measure space $\bigl(M,c^{-1}\overline{g},c^{-\frac{n+m}{2}}{\rm e}^{-\overline{\phi}}{\rm d}V_{\overline{g}},m\bigr)$
has weighted scalar curvature $cR_{\overline{\phi}}^m=f$.
\end{proof}

Suppose $(M,g_M)$ is an $n$-dimensional compact space form with sectional curvature $1$,
and $(N,g_N)$ is an $n$-dimensional compact space form with sectional curvature $-1$.
Then $M\times N$ equipped with the product metric $g_M\oplus g_N$
is not Ricci-flat. Moreover,
the smooth metric measure space $(M\times N,g_M\oplus g_N,{\rm d}V_{g_M\oplus g_N},m)$, i.e., $\phi\equiv 0$,
has zero weighted scalar curvature. It follows from Proposition \ref{prop4.2} that
we can find
$\bigl(M\times N,\overline{g},{\rm e}^{-\overline{\phi}}{\rm d}V_{\overline{g}},m\bigr)$ such that
its weighted scalar curvature $R_{\overline{\phi}}^m$ is equal to any prescribed smooth function $f$.

\section{Weighted vacuum static space} \label{sec4}

In general relativity, a \textit{static spacetime} is a four-dimensional Lorentzian manifold which possesses a timelike Killing field and a spacelike hypersurface which is orthogonal to the integral curves of this Killing field. In this case coordinates can be chosen so that the metric $\overline{g}$ is a warped product of the hypersurface (with metric $g$) and a time interval, where the warping factor $f$ is independent of time, i.e.,
\begin{equation*}
 \overline{g}=-f^2{\rm d}t^2+g.
\end{equation*}
The next proposition shows that the weighted vacuum static space is actually related to static spacetimes in the weighted sense.

\begin{Proposition}
 Let $\bigl(M,g,{\rm e}^{-\phi}{\rm d}V_g,m\bigr)$ be a smooth metric measure space.
 Consider $\bigl(\mathbb{R}\times M,\allowbreak \overline{g}=-f^2{\rm d}t^2+g,{\rm e}^{-\overline{\phi}}{\rm d}V_{\overline{g}},m\bigr)$ as a smooth metric measure space where $\overline{\phi}$ is the pullback of~$\phi$ via the projection $\mathbb{R}\times M\to M$. Then $\bigl(M,g,{\rm e}^{-\phi}{\rm d}V_g,m\bigr)$ is a weighted vacuum static space with a potential function $f$ if and only if the warped product metric $\overline{g}=-f^2{\rm d}t^2+g$ is Einstein whenever $f\neq0$ in the weighted sense, i.e., ${\rm Rc}\,_{\overline{\phi}}^m(\overline{g})=k\overline{g}$ for some constant $k$.
\end{Proposition}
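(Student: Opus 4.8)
The plan is to compute the Bakry--\'{E}mery Ricci tensor ${\rm Rc}_{\overline{\phi}}^m(\overline{g})={\rm Rc}(\overline{g})+{\rm Hess}_{\overline{g}}\overline{\phi}-\frac{1}{m}{\rm d}\overline{\phi}\otimes{\rm d}\overline{\phi}$ of $\bigl(\mathbb{R}\times M,\overline{g}\bigr)$ explicitly and to match it against $\overline{g}$ block by block. Here $\bigl(\mathbb{R}\times M,\overline{g}=-f^2{\rm d}t^2+g\bigr)$ is a Lorentzian warped product with base $(M,g)$, one-dimensional fibre $(\mathbb{R},-{\rm d}t^2)$ and warping function $f$, and $\overline{\phi}=\pi^*\phi$ is constant along the fibre. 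By the standard formulas for the Ricci tensor of a warped product, for $X,Y$ tangent to $M$ one has ${\rm Rc}(\overline{g})(X,Y)={\rm Rc}(X,Y)-\frac{1}{f}{\rm Hess}_gf(X,Y)$, ${\rm Rc}(\overline{g})(X,\partial_t)=0$ and ${\rm Rc}(\overline{g})(\partial_t,\partial_t)=f\Delta_gf$; and, since $\overline{\phi}$ is pulled back from $M$, the warped-product connection formulas give ${\rm Hess}_{\overline{g}}\overline{\phi}(X,Y)={\rm Hess}_g\phi(X,Y)$, ${\rm Hess}_{\overline{g}}\overline{\phi}(X,\partial_t)=0$ and ${\rm Hess}_{\overline{g}}\overline{\phi}(\partial_t,\partial_t)=-f\langle\nabla f,\nabla\phi\rangle$, while ${\rm d}\overline{\phi}\otimes{\rm d}\overline{\phi}$ annihilates $\partial_t$. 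Adding these and using the definitions of ${\rm Rc}_\phi^m$ and $\Delta_\phi$, the $M$-block of ${\rm Rc}_{\overline{\phi}}^m(\overline{g})$ equals ${\rm Rc}_\phi^m-\frac1f{\rm Hess}_gf$, its mixed block vanishes, and its $(\partial_t,\partial_t)$-component equals $f\Delta_\phi f$.

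Since the corresponding blocks of $\overline{g}$ are $g$, $0$ and $-f^2$, on the open set $\{f\neq0\}$ the equation ${\rm Rc}_{\overline{\phi}}^m(\overline{g})=k\overline{g}$ is equivalent to the pair
\[
{\rm Hess}_gf=f\bigl({\rm Rc}_\phi^m-kg\bigr)\qquad\textrm{ and }\qquad \Delta_\phi f=-kf.
\]
For the implication ``$\Rightarrow$'', suppose $f$ satisfies \eqref{wstatic}; as noted after Definition \ref{def_wstatic}, this is equivalent to \eqref{wstatic2} together with \eqref{wstatic_del}, which are precisely the two displayed equations with $k=R_\phi^m/(n+m-1)$. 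Moreover $k$ is a genuine constant: the identity $f\,{\rm d}R_\phi^m=0$ and the fact that $0$ is a regular value of a nonconstant $f$, both established in the proof of Proposition \ref{Rphiconst}, do not require $M$ to be compact, so $R_\phi^m$ is constant. Hence ${\rm Rc}_{\overline{\phi}}^m(\overline{g})=k\overline{g}$ holds on $\{f\neq0\}$ with the constant $k=R_\phi^m/(n+m-1)$.

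For ``$\Leftarrow$'', suppose the two displayed equations hold with $k$ constant. Substituting $\Delta_\phi f=-kf$ into the tensor equation gives $-(\Delta_\phi f)g+{\rm Hess}_gf-f{\rm Rc}_\phi^m=0$, i.e., the first equation of \eqref{wstatic} on $\{f\neq0\}$; since $f\not\equiv0$, this extends to all of $M$ by continuity. It remains to recover the second equation of \eqref{wstatic}: taking the $g$-trace of the first equation and using $\Delta_\phi f=-kf$ rewrites $\langle\nabla\phi,\nabla f\rangle+f\Delta_\phi\phi$ in terms of $f$, $R_\phi^m$ and $k$, and substituting this into the left-hand side of the second equation reduces it to $\frac{f}{m}\bigl(R_\phi^m-(n+m-1)k\bigr)$. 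The step I expect to be the main obstacle is showing that this is zero, i.e., that the constant $k$ must equal $R_\phi^m/(n+m-1)$. I would obtain it by taking the weighted divergence of the first equation of \eqref{wstatic}, using Lemma \ref{bianchi} and the three divergence identities from the proof of Proposition \ref{Rphiconst} together with $\Delta_\phi f=-kf$; this yields an identity expressing ${\rm d}R_\phi^m$ as a multiple of ${\rm d}\phi$, and one then argues --- using the full tensor equation, not merely its trace and divergence --- that the multiple vanishes. Once $k=R_\phi^m/(n+m-1)$ is known, the two displayed equations are exactly \eqref{wstatic2} and \eqref{wstatic_del}, so \eqref{wstatic} holds and $\bigl(M,g,{\rm e}^{-\phi}{\rm d}V_g,m\bigr)$ is a weighted vacuum static space with potential $f$.
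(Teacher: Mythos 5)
Your forward direction is essentially the paper's proof: the same block-by-block computation of ${\rm Rc}_{\overline{\phi}}^m(\overline{g})$ for the warped product, reducing the Einstein condition on $\{f\neq0\}$ to ${\rm Hess}_gf=f\bigl({\rm Rc}_\phi^m-kg\bigr)$ and $\Delta_\phi f=-kf$, and then invoking \eqref{wstatic2} and \eqref{wstatic_del}. Your extra remark that the constancy of $k=R_\phi^m/(n+m-1)$ follows from $f\,{\rm d}R_\phi^m=0$ without compactness is a useful point that the paper leaves implicit.

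The converse, however, has a genuine gap, and it sits exactly where you flag it. From the two displayed equations you correctly recover the first equation of \eqref{wstatic} and correctly reduce the second to the identity $R_\phi^m=(n+m-1)k$ on $\{f\neq0\}$; but your proposed route to that identity cannot succeed. Taking the weighted divergence of the first equation of \eqref{wstatic} only yields $\frac{1}{2}{\rm d}R_\phi^m=\frac{1}{m}\bigl(R_\phi^m-(n+m-1)k\bigr){\rm d}\phi$, which does not force the bracket to vanish. Concretely, on the unit round sphere with $\phi\equiv0$ and $f$ a first spherical harmonic one has ${\rm Hess}_gf=-fg=f({\rm Rc}-ng)$ and $\Delta f=-nf$, so both displayed equations hold with the constant $k=n$ (the warped product is de Sitter space, hence Einstein, hence trivially Einstein in the weighted sense since $\overline{\phi}$ is constant), yet $R_\phi^m-(n+m-1)k=n(n-1)-n(n+m-1)=-mn\neq0$, and the second equation of \eqref{wstatic} reads $\Delta f=0$, which fails. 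So the implication you are trying to establish is false as written, and no manipulation of the tensor equation will produce it. Be aware that the paper proves only the forward direction in detail and dismisses the converse with ``the converse can be proved in a similar way'', so the difficulty you ran into is real and is not resolved in the source either; rescuing the converse seems to require a stronger reading of ``Einstein in the weighted sense'' (one that also pins down the value of $k$, e.g., a quasi-Einstein condition with prescribed characteristic constant) or an additional hypothesis.
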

\begin{proof}
 We work on a component of the open set where $f$ is nowhere-zero. Then
 we have the following well-known formulas for the curvature tensor of the warped product $\bigl(B\times_f F,\overline{g}=-f^2{\rm d}t^2+g\bigr)$:
 For vectors $X,Y$ tangent to the base $B$ and $V$, $W$ tangent to the $1$-dimensional fiber $F$, and with ${\rm \overline{Rc}}={\rm Rc}(\overline{g})$,
 \begin{gather*}
 {\rm \overline{Rc}}(X,Y)={\rm Rc}^B(X,Y)-\frac{1}{f}{\rm Hess}_gf(X,Y),\\
 {\rm \overline{Rc}}(X,V)=0,\\
 {\rm \overline{Rc}}(V,W)=-\frac{\Delta_gf}{f}\overline{g}(V,W).
 \end{gather*}
 Using these, we compute the Bakry--\'{E}mery Ricci tensor of $\overline{g}$ as follows:
 \begin{gather*}
 {\rm \overline{Rc}}\,_{\overline{\phi}}^m(X,Y)={\rm \overline{Rc}}(X,Y)+{\rm Hess}\,_{\overline{g}}\overline{\phi}(X,Y)-\frac{1}{m}{\rm d}\overline{\phi}\otimes {\rm d}\overline{\phi}(X,Y)\\
 \hphantom{{\rm \overline{Rc}}\,_{\overline{\phi}}^m(X,Y)}={\rm Rc}^B(X,Y)-\frac{1}{f}{\rm Hess}_gf(X,Y)+{\rm Hess}_{g}\phi(X,Y)-\frac{1}{m}{\rm d}\phi\otimes {\rm d}\phi(X,Y)\\
 \hphantom{{\rm \overline{Rc}}\,_{\overline{\phi}}^m(X,Y)}={\rm Rc}_\phi^m(X,Y)-\frac{1}{f}{\rm Hess}_gf(X,Y)\\
 \hphantom{{\rm \overline{Rc}}\,_{\overline{\phi}}^m(X,Y)}=-\frac{\Delta_\phi f}{f}g(X,Y)\\
 \hphantom{ {\rm \overline{Rc}}\,_{\overline{\phi}}^m(X,Y)}=\frac{R_\phi^m}{n+m-1}g(X,Y)=\frac{R_\phi^m}{n+m-1}\overline{g}(X,Y),
 \end{gather*}
 where the fourth equality follows from \eqref{wstatic} and the fifth equality follows from \eqref{wstatic_del}.
Similarly, one can easily see that
\begin{gather*}
 {\rm \overline{Rc}}\,_{\overline{\phi}}^m(X,V)=0,\\
 {\rm \overline{Rc}}\,_{\overline{\phi}}^m(V,W)=-\frac{\Delta_\phi f}{f}\overline{g}(V,W)=\frac{R_\phi^m}{n+m-1}\overline{g}(V,W).
\end{gather*}
Therefore, the warped product metric $\overline{g}$ is Einstein in the weighted sense.

The converse can be proved in a similar way.
\end{proof}

We say that $(M,g,f)$ is a \textit{vacuum static space}, if $(M,g)$ is a Riemannian manifold and smooth function $f\neq0$ satisfies the following equation:
\begin{equation}\label{vacuum}
 {\rm Hess}_g f=f\left({\rm Rc}-\frac{R}{n-1}g\right).
\end{equation}

\begin{Proposition}
Let $m>0$. Let $\mathcal{M}_R$ be the space of all closed, connected vacuum static spaces and $\mathcal{M}_w$ be the space of all closed,
connected weighted vacuum static spaces. Suppose $\bigl(M,g,{\rm e}^{-\phi}{\rm d}V_g,f\bigr)\in \mathcal{M}_R\cap \mathcal{M}_w$ $($with the same $f)$, then $\bigl(M,g,{\rm e}^{-\phi}{\rm d}V_g,m\bigr)$ is isometric to a~Ricci-flat manifold with $\phi$ being constant.
\end{Proposition}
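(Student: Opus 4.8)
The plan is to exploit that the \emph{same} potential function $f$ satisfies both the ordinary vacuum static equation \eqref{vacuum} and its weighted form \eqref{wstatic2}: subtracting these cancels ${\rm Hess}_g f$ together with the Ricci curvature of $g$, leaving an equation involving $\phi$ alone.

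First I would record that on the connected closed manifold $M$ both scalar curvatures are constant: $R_\phi^m$ by Proposition \ref{Rphiconst}, and $R$ by the classical fact of Fischer--Marsden \cite{Fischer&Marsden} that a closed vacuum static space has constant scalar curvature (which also follows from the unweighted analogue of the divergence computation in the proof of Proposition \ref{Rphiconst}, giving $f\,{\rm d}R=0$). Hence $\lambda:=\frac{R_\phi^m}{n+m-1}-\frac{R}{n-1}$ is a constant. Using ${\rm Rc}_\phi^m-{\rm Rc}={\rm Hess}_g\phi-\frac{1}{m}{\rm d}\phi\otimes{\rm d}\phi$ and subtracting \eqref{vacuum} from \eqref{wstatic2} gives $f\bigl({\rm Hess}_g\phi-\frac{1}{m}{\rm d}\phi\otimes{\rm d}\phi-\lambda g\bigr)=0$. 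As in the proof of Proposition \ref{Rphiconst}, either $f$ is a nonzero constant or $0$ is a regular value of $f$, so in either case $\{f\ne0\}$ is dense in $M$; by continuity we then obtain
\begin{equation*}
 {\rm Hess}_g\phi-\frac{1}{m}{\rm d}\phi\otimes{\rm d}\phi=\lambda g\qquad\text{on }M.
\end{equation*}

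The decisive step is the substitution $u:={\rm e}^{-\phi/m}$, which is smooth and strictly positive: a short computation turns the preceding display into the Obata-type equation ${\rm Hess}_g u=-\frac{\lambda}{m}\,u\,g$. Taking the trace gives $\Delta_g u=-\frac{n\lambda}{m}u$; integrating over the closed manifold and using $\int_M u\,{\rm d}V_g>0$ forces $\lambda=0$ (equivalently, $\lambda\ne0$ would make $\Delta_g u$ of one fixed sign, contradicting $\int_M\Delta_g u\,{\rm d}V_g=0$). Then ${\rm Hess}_g u\equiv0$, so $u$, and hence $\phi$, is constant on the connected closed $M$. With $\phi$ constant we have $R_\phi^m=R$, and $\lambda=0$ reads $\frac{R}{n+m-1}=\frac{R}{n-1}$, which forces $R=R_\phi^m=0$ since $m>0$. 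Finally $\bigl(M,g,{\rm e}^{-\phi}{\rm d}V_g,m\bigr)$ is a closed weighted vacuum static space with vanishing weighted scalar curvature, so Proposition \ref{propRneg} shows it is isometric to a Ricci-flat manifold with $\phi$ constant.

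The main obstacle is the bookkeeping at the start, not a deep estimate: one has to be certain that \emph{both} $R$ and $R_\phi^m$ are constant, for otherwise $\lambda$ is merely a function and the substitution $u={\rm e}^{-\phi/m}$ yields a Hessian equation with variable coefficient, from which the closed-manifold integration no longer extracts a contradiction. One also has to treat the zero set of $f$ as in Proposition \ref{Rphiconst} in order to promote the subtracted identity from $\{f\ne0\}$ to all of $M$. Once the Obata-type equation is available, positivity of $u$ makes the remaining deductions immediate.
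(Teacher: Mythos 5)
Your proof is correct, but it takes a genuinely different route from the paper's. You subtract the full tensor equations \eqref{vacuum} and \eqref{wstatic2} so that ${\rm Hess}_g f$ and ${\rm Rc}$ cancel, arriving at ${\rm Hess}_g\phi-\frac{1}{m}{\rm d}\phi\otimes{\rm d}\phi=\lambda g$ with $\lambda=\frac{R_\phi^m}{n+m-1}-\frac{R}{n-1}$ constant on the dense set $\{f\neq 0\}$ (hence everywhere), and the substitution $u={\rm e}^{-\phi/m}$ converts this into the Obata-type equation ${\rm Hess}_g u=-\frac{\lambda}{m}u\,g$; integrating the trace over the closed manifold and using $u>0$ kills $\lambda$, forces $u$ and hence $\phi$ to be constant, after which $\lambda=0$ gives $R=R_\phi^m=0$ and Proposition \ref{propRneg} finishes. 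The paper instead only takes \emph{traces} of the two equations, obtaining the scalar identity $\Delta_\phi\phi=\frac{m-1}{m+n-1}R_\phi^m+\frac{R}{n-1}$ on the dense set where $f\neq0$, and then argues by integration in two separate cases: for $m\geq1$ it uses nonnegativity of the two constant curvatures directly, while for $0<m<1$ it rewrites the identity via the definition of $R_\phi^m$ and integrates against the unweighted volume form. Your argument exploits more of the tensorial content of the two static equations and in return is uniform in $m>0$, eliminating the case split; the paper's argument gets by with only the trace information. Both approaches rest on the same preliminaries, which you correctly flag as the essential bookkeeping: constancy of $R$ (classical Fischer--Marsden) and of $R_\phi^m$ (Proposition \ref{Rphiconst}), and the density of $\{f\neq0\}$ obtained from the regularity of $0$ as a value of a non-constant potential.
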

\begin{proof}
Suppose $f^{-1}(0)\neq \varnothing$. It is well known that in a vacuum static space, ${\rm d}f\neq0$ on $f^{-1}(0)$ (cf. \cite{Fischer&Marsden}). Then $f^{-1}(0)$ is a regular hypersurface in $M$ and hence $f\neq0$ on a dense subset of $M$. Combining \eqref{wstatic2} and \eqref{vacuum}, we have
 \begin{equation*}
 -\frac{R}{n-1}f=\Delta_gf=f\left(\frac{m-1}{m+n-1}R_\phi^m-\Delta_\phi\phi\right).
 \end{equation*}
Since $f$ is nonzero on a dense set, this implies that
\begin{equation}\label{delphiphi}
 \Delta_\phi\phi=\frac{m-1}{m+n-1}R_\phi^m+\frac{R}{n-1}.
\end{equation}
 It is well known that if $(M,g,f)$ is vacuum static space, then $(M,g)$ has nonnegative constant scalar curvature. And by Proposition \ref{Rphiconst}, the weighted scalar curvature $R_\phi^m$ is also nonnegative constant. If $m\geq1$, then \eqref{delphiphi} implies $\Delta_\phi\phi\geq0$. Therefore, $R=0$ and $\phi$ is constant. Thus $R_\phi^m=0$. We conclude from Proposition \ref{propRneg} that $\bigl(M,g,{\rm e}^{-\phi}{\rm d}V_g,m\bigr)$ is Ricci-flat. Suppose $0<m<1$. By \eqref{wsc}, we can rewrite \eqref{delphiphi} as
 \begin{equation}\label{delphiphi2}
 \Delta_\phi\phi=\frac{mn}{(n-1)(m+n-1)}R+\frac{2(m-1)}{m+n-1}\Delta \phi-\frac{(m-1)(m+1)}{m(m+n-1)}|\nabla \phi|^2.
 \end{equation}
 Integrating \eqref{delphiphi2} over ${\rm d}V_g$, we have
 \begin{equation*}
 \int_M \Delta_\phi\phi {\rm d}V_g=\frac{mn}{(n-1)(m+n-1)}\int_MR{\rm d}V_g-\frac{(m-1)(m+1)}{m(m+n-1)}\int_M|\nabla \phi|^2{\rm d}V_g\geq0.
 \end{equation*}
 But based on the formula for $\Delta_\phi\phi$, this implies that $-\int_M|\nabla \phi|^2{\rm d}V_g\geq0$. Therefore, $\phi$ is constant and then $R=0$. Thus $R_\phi^m=0$. Again by Proposition \ref{propRneg}, we conclude that $\bigl(M,g,{\rm e}^{-\phi}{\rm d}V_g,m\bigr)$ is Ricci-flat.
\end{proof}

\begin{Remark}
 It would be interesting to study the relation between $\mathcal{M}_R$ and $\mathcal{M}_w$. One can ask the following question: If both $\mathcal{M}_R$ and $\mathcal{M}_w$ are nonempty, does that mean $\mathcal{M}_R\cap\mathcal{M}_w$ is also nonempty? This question is left to future research, and we hope to come back to this later.
\end{Remark}

 We say that $\bigl(M,g,{\rm e}^{-\phi}{\rm d}V_g,m\bigr)$ is \textit{locally conformally flat in the weighted sense} if for each $p\in M$, there is a neighborhood $U$ of $p$ on which there is a conformal factor $u$ for which $\bigl({\rm e}^{2u}g,{\rm e}^{(m+n)u}{\rm e}^{-\phi}{\rm d}V_g\bigr)=(g_{\rm flat},{\rm d}V_{g_{\rm flat}})$. Just as the local conformal flatness in the Riemannian manifold $\bigl(M^n,g\bigr)$ is equivalent to the vanishing of the Weyl tensor when $n\geq4$, the local conformal flatness in the weighted sense
has the following characterization:
 Consider the following modifications of the curvature tensors:
\begin{gather*}
P_\phi^m:={\rm Rc}^\phi-\frac{1}{2(m+n-1)}R_\phi^mg,\\
A_\phi^m:={\rm Rm}-\frac{1}{m+n-2}P_\phi^m\owedge g.
\end{gather*}
Here $\owedge$ denoted the Kulkarni--Nomizu product. We call $P_\phi^m$ the \textit{weighted Schouten tensor}, and~$A_\phi^m$ the \textit{weighted Weyl tensor}. Then a smooth metric measure space $\bigl(M,g,{\rm e}^{-\phi}{\rm d}V_g,m\bigr)$, where $n\geq3$ and $m+n\neq3$, is locally conformally flat in the weighted sense if and only if $A_\phi^m=0$
(cf.~\cite[Lemma 6.6]{Case1}).

We are now ready to prove Theorem \ref{thm1.1}.

\begin{proof}[Proof of Theorem \ref{thm1.1}]
We follow the argument in \cite{Catino} (see also \cite{He}).
Taking the covariant derivative of \eqref{wstatic2} yields
 \begin{equation*}
 \nabla_i\nabla_j\nabla_kf={\rm Rc}^\phi_{jk}\nabla_if+f\nabla_i{\rm Rc}^\phi_{jk}-\frac{R_\phi^m}{n+m-1}g_{jk}\nabla_if,
 \end{equation*}
 where ${\rm Rc}^\phi_{ij}$ denotes ${\rm Rc}_\phi^m({\rm e}_i,{\rm e}_j)$. This implies that
 \begin{align}\label{wstatic_rijkl}
 R_{ijkl}\nabla^l f& = \nabla_i\nabla_j\nabla_kf-\nabla_j\nabla_i\nabla_kf\\
& = {\rm Rc}^\phi_{jk}\nabla_if-{\rm Rc}^\phi_{ik}\nabla_jf+f\nabla_i{\rm Rc}^\phi_{jk}-f\nabla_j{\rm Rc}^\phi_{ik}
 -\frac{R_\phi^m}{n+m-1}(g_{jk}\nabla_if-g_{ik}\nabla_jf).\nonumber
 \end{align}

Note that if $\bigl(M,g,{\rm e}^{-\phi}{\rm d}V_g,m\bigr)$ is locally conformally flat in the weighted sense, then the weighted Schouten tensor
\[P_\phi^m:={\rm Rc}_\phi^m-\frac{R_\phi^m}{2(m+n-1)}g\] is a Codazzi tensor, i.e., $\nabla_iP^\phi_{jk}=\nabla_jP^\phi_{ik}$ (see \cite[Lemma 3.2]{Case2}). Since $R_\phi^m$ is constant by Proposition \ref{propRneg}, this implies that the Bakry--\'{E}mery Ricci tensor is a Codazzi tensor.
So \eqref{wstatic_rijkl} reduces to\looseness=-1
\begin{equation}\label{rijkl2}
 R_{ijkl}\nabla^l f= {\rm Rc}^\phi_{jk}\nabla_if-{\rm Rc}^\phi_{ik}\nabla_jf-\frac{R_\phi^m}{n+m-1}(g_{jk}\nabla_if-g_{ik}\nabla_jf).
 \end{equation}
On the other hand, it follows from $A_\phi^m\equiv 0$ that
\begin{align}\label{rijkl3}
 \nonumber R_{ijkl}\nabla^lf={}&-\frac{1}{m+n-2}\bigl({\rm Rc}^\phi_{il}g_{jk}\nabla^lf+{\rm Rc}^\phi_{jk}\nabla_if-{\rm Rc}_{ik}^\phi\nabla_jf-{\rm Rc}^\phi_{jl}g_{ik}\nabla^lf\bigr)\\
&+\frac{R_\phi^m}{(n+m-1)(n+m-2)}(g_{jk}\nabla_if-g_{ik}\nabla_jf).
\end{align}
Combining \eqref{rijkl2} and \eqref{rijkl3}, we have
\begin{gather}
\nonumber -{\rm Rc}_{il}^\phi g_{jk}\nabla^lf +{\rm Rc}_{jl}^\phi g_{ik}\nabla^lf\\
\qquad{} =(n+m-1)\bigl({\rm Rc}_{jk}^\phi\nabla_if-{\rm Rc}_{ik}^\phi\nabla_jf\bigr)-R_\phi^m(g_{jk}\nabla_if-g_{ik}\nabla_jf).\label{rijkl4}
\end{gather}
 This shows that $\nabla f$ is an eigenvector of ${\rm Rc}^\phi$, i.e., ${\rm Rc}^\phi(X,\nabla f)=0$ for $X\perp \nabla f$.

For any regular value $c_0$ of the function $f$, consider the level surface $\Sigma_{c_0}=f^{-1}(c_0)$. Suppose~$I$ is an open interval containing $c_0$ such that $f$ has no critical points in the open neighborhood $U_I=f^{-1}(I)$ of $\Sigma_{c_0}$. Then we can express the metric $g$ on $U_I$ as
\begin{equation*}
 {\rm d}s^2=\frac{1}{|\nabla f|^2}{\rm d}f^2+g_{f},
\end{equation*}
where $g_{f}=g_{ab}(f,\theta){\rm d}\theta^a{\rm d}\theta^b$ is the induced metric and $\theta=\bigl(\theta^1,\dots,\theta^n\bigr)$ is any local coordinates system on $\Sigma_{c_0}$.

On the other hand, for any vector field $X\perp \nabla f$, we have
\begin{equation*}
 \nabla_X\bigl(|\nabla f|^2\bigr)=2\nabla^2f(\nabla f,X)=2f\left({\rm Rc}^\phi(\nabla f,X)-\frac{R_\phi^m}{n+m-1}g(\nabla f,X)\right)=0,
\end{equation*}
where the second equality follows from (\ref{wstatic2})
and the last equality follows from the fact that $\nabla f$ is an eigenvector of ${\rm Rc}^\phi$. Hence, $|\nabla f|^2$ is constant on any regular value surface $\Sigma_c=f^{-1}(c)\subset U_I$, which are all diffeomorphic to $\Sigma_{c_0}$. This allows us to make the change of variable by setting
\begin{equation*}
 r(x)=\int \frac{{\rm d}f}{|\nabla f|},
\end{equation*}
so that we can further express the metric $g$ on $U_I$ as
\begin{equation*}
 {\rm d}s^2={\rm d}r^2+g_{ab}(r,\theta){\rm d}\theta^a{\rm d}\theta^b.
\end{equation*}
Let $\nabla r=\frac{\partial}{\partial r}$. Then $|\nabla r|=1$ and $\nabla f=f'(r) \frac{\partial }{\partial r}$ on $U_I$. Note that $f'(r)$ does not change sign on~$U_I$ because $f$ has no critical points there. Thus, we may assume $I=(\alpha,\beta)$ with $f'(r)>0$ for $r\in (\alpha,\beta)$. It is also easy to check that
\begin{equation}\label{cao2.4}
 \nabla_{\frac{\partial}{\partial r}}\frac{\partial}{\partial r}=0,
\end{equation}
so integral curves to $\nabla r$ are geodesics.

Next, it follows from \eqref{wstatic2} and \eqref{cao2.4} that
\begin{equation}\label{cao2.4b}
 f\left({\rm Rc}^\phi_{rr}-\frac{R_\phi^m}{n+m-1}\right)=\nabla^2f\left(\frac{\partial}{\partial r},\frac{\partial}{\partial r}\right)=f''(r).
\end{equation}
We can therefore conclude that ${\rm Rc}^\phi_{rr}$ is also constant on $\Sigma_c\subset U_I$. Moreover, the second fundamental form of $\Sigma_c$ is given by
\begin{equation}\label{cao2.6}
 h_{ab}=\frac{\nabla_a\nabla_bf}{|\nabla f|}=\frac{f}{f'}\left({\rm Rc}^\phi_{ab}-\frac{R_\phi^m}{n+m-1}g_{ab}\right).
\end{equation}
On the other hand, by \eqref{rijkl4}, we have
\begin{equation}\label{cao2.6b}
 {\rm Rc}^\phi_{ab}=\frac{1}{n+m-1}\bigl(R_\phi^m-{\rm Rc}^\phi_{rr}\bigr)g_{ab}.
\end{equation}
Combining \eqref{cao2.6} and \eqref{cao2.6b}, we have
\begin{equation}\label{cao2.6c}
 h_{ab}=-\frac{1}{n+m-1}\frac{f}{f'}{\rm Rc}^\phi_{rr}g_{ab}.
\end{equation}
In particular, $\Sigma_c$ is umbilical and its mean curvature is given by
\begin{equation*}
 H=-\frac{n-1}{n+m-1}\frac{f}{f'}{\rm Rc}^\phi_{rr}g_{ab},
\end{equation*}
which is again constant along $\Sigma_c$.

Now, we fix a local coordinates system
\begin{equation*}
 \bigl(x^1,x^2,\dotsc,x^n\bigr)=\bigl(r,\theta^2,\dotsc,\theta^n\bigr)
\end{equation*}
in $U_I$, where $\bigl(\theta^2,\dotsc,\theta^n\bigr)$ is any local coordinates system on the level surface $\Sigma_{c_0}$, and indices $a,b,c,\dotsc$ range from $2$ to $n$. Then, computing in this local coordinates system we obtain that
\begin{equation*}
 h_{ab}=-\langle \partial_r,\nabla_a\partial_b\rangle=-\bigr\langle \partial_r,\Gamma_{ab}^1\partial_r\bigl\rangle =-\Gamma_{ab}^1.
\end{equation*}
But the Christoffel symbol $\Gamma_{ab}^1$ is given by
\begin{equation*}
 \Gamma_{ab}^1=\frac{1}{2}g^{11}\left(-\frac{\partial g_{ab}}{\partial r}\right)=-\frac{1}{2}\frac{\partial g_{ab}}{\partial r}.
\end{equation*}
Hence, we get
\begin{equation*}
 \frac{\partial g_{ab}}{\partial r}=-\frac{2}{n+m-1}\frac{f}{f'}{\rm Rc}^\phi_{rr}g_{ab}=-\frac{2}{n+m-1}\frac{f}{f'}\left(\frac{f''}{f}+\frac{R_\phi^m}{n+m-1}\right)g_{ab},
\end{equation*}
where the second equality follows from \eqref{cao2.4b}.
Thus we can see that in any open neighborhood $U_{\alpha}^\beta=f^{-1} ((\alpha,\beta) )$ of $\Sigma_c$ in which $f$ has no critical points, the metric $g$ can be expressed as
\begin{equation*}
 {\rm d}s^2={\rm d}r^2+w(r)^2\bar{g},
\end{equation*}
where $\bigl(\theta^2,\dots,\theta^n\bigr)$ is any local coordinates system on $\Sigma_c$, $\bar{g}=g_{ab}(r_0,\theta){\rm d}\theta^a{\rm d}\theta^b$ is the induced metric on $\Sigma_c=r^{-1}(r_0)$, and the warping function $w(r)$ satisfying
\begin{equation*}
 \frac{w'}{w}=-\frac{1}{n+m-1}\frac{f''}{f'}-\frac{R_\phi^m}{(n+m-1)^2}\frac{f}{f'}.
\end{equation*}
Furthermore, from the Gauss equation, one can see that the sectional curvatures of $(\Sigma_c,\bar{g})$ are given by
\begin{align*}
 R_{abab}^\Sigma&= R_{abab}+h_{aa}h_{bb}-h_{ab}^2\\
& = \frac{1}{n+m-2}\bigl({\rm Rc}^\phi_{aa}+{\rm Rc}^\phi_{bb}\bigr)-\frac{R_\phi^m}{(n+m-1)(n+m-2)}+\frac{1}{(n+m-1)^2}\left(\frac{f}{f'}Rc^\phi_{rr}\right)^2\\
& = \frac{R_\phi^m-2{\rm Rc}^\phi_{rr}}{(n+m-1)(n+m-2)}+\frac{1}{(n+m-1)^2}\left(\frac{f}{f'}{\rm Rc}^\phi_{rr}\right)^2
\end{align*}
for $a,b=2,\dots,n$, where the second equality follows from $A_\phi^m\equiv 0$ and \eqref{cao2.6c},
 and the third equality follows from \eqref{cao2.6b}. Since all the terms on the right-hand side are constant on $\Sigma_c$, we conclude that the sectional curvatures of $(\Sigma_c,\bar{g})$ are constant.
\end{proof}

\section[Rigidity phenomena of flat manifolds and prescribing the weighted scalar curvature]{Rigidity phenomena of flat manifolds\\ and prescribing the weighted scalar curvature}\label{sec5}

First we prove Theorem \ref{rigidity}.

\begin{proof}[Proof of Theorem \ref{rigidity}]
 Let $(F,g_F)$ be a $m$-dimensional closed flat Riemannian manifold. Since $\overline{g}$ is flat and $\overline{\phi}$ is constant,
 \smash{$\bigl(M\times F,g_1:=\overline{g}+{\rm e}^{-2\overline{\phi}/m}g_F\bigr)$} is a $(n+m)$-dimensional closed flat Riemannian manifold. By \cite[Theorem~B]{Fischer&Marsden}, there is a $\epsilon_1$ such that if a Riemannian metric~$h$ on $M\times F$ has nonnegative scalar curvature and $\|g_1-h\|_{C^2(M\times F,g_1)}<\epsilon_1$, then $h$ is flat. Let~$(g,\phi)$ be a smooth metric measure structure on $M$ with $R_\phi^m\geq0$. \smash{Then $g_2:=g+{\rm e}^{-\frac{2\phi}{m}}g_F$ is} a Riemannian metric on $M\times F$ such that its scalar curvature $R_{g_2}=R_{\phi}^m$ is nonnegative. And one can easily see that
 \begin{equation*}
 \|g_1-g_2\|_{C^2(M\times F,g_1)}\leq C_1\bigl\|\bigl(\overline{g},\overline{\phi}\bigr)-(g,\phi)\bigr\|_{C^2(M,\overline{g})}
 \end{equation*}
 for some constant $C_1=C_1\bigl(M,\overline{g},\overline{\phi}\bigr)$. Thus if $\bigl\|\bigl(\overline{g},\overline{\phi}\bigr)-(g,\phi)\bigr\|_{C^2(M,\overline{g})}<\frac{\epsilon_1}{C_1}$ then we have $\|g_1-g_2\|_{C^2(M\times F,g_1)}<\epsilon_1$ which implies that \smash{$g_2=g+{\rm e}^{-\frac{2\phi}{m}}g_F$} must be flat. Since $g_F$ is flat, this implies that $g$ is flat and $\phi$ is constant.
\end{proof}

In the remainder of this section, we consider the problem of prescribing the weighted scalar curvature on smooth metric measure spaces.
More precisely, given a smooth function $f$ in $M$,
we want to find a smooth metric measure space $\bigl(M, g, {\rm e}^{-\phi}{\rm d}V_g, m\bigr)$ such that the weighted scalar curvature is equal to $f$.
In particular, we are going to prove Theorem \ref{thm1.3}.
To this end, we first recall the following approximation lemma.

\begin{Lemma}[{\cite[Theorem 2.1]{Kazdan&Warner2}}]\label{approxlem}
 Let $f_1, f_2\in C^\infty(M)$. If $\min f_1\leq f_2\leq \max f_1$ in $M$, then given any positive $\epsilon$, there is a diffeomorphism $\varphi$ of $M$ such that, for $p>n$, we have that
\begin{equation*}
 \|f_1\circ\varphi-f_2\|_{L^p(M)}<\epsilon.
\end{equation*}
\end{Lemma}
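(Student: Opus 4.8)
The plan is to prove this purely measure-theoretic approximation statement by reducing it to the construction of a single volume-redistributing diffeomorphism, exploiting the fact that on a connected manifold a diffeomorphism need not preserve volume and can therefore push almost all of one region into an arbitrarily small target region. First I would reduce to approximating a \emph{simple} function. Since $f_2$ is continuous with values in $[\min f_1,\max f_1]$, for any $\eta>0$ there is a finite measurable partition $M=\bigsqcup_{j=1}^N A_j$ (say into small coordinate cubes) and constants $c_j\in[\min f_1,\max f_1]$ such that the step function $s=\sum_{j=1}^N c_j\chi_{A_j}$ satisfies $\|s-f_2\|_{L^p(M)}<\eta$. By the triangle inequality it then suffices to produce a diffeomorphism $\varphi$ with $\|f_1\circ\varphi-s\|_{L^p(M)}$ small.

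Next I would set up the target data for $\varphi$. For each $j$, the intermediate value theorem on the connected manifold $M$ furnishes a point $p_j$ with $f_1(p_j)=c_j$, and by continuity a small ball $B_j=B(p_j,\rho)$ on which $|f_1-c_j|<\eta$; shrinking $\rho$ we may take the $B_j$ pairwise disjoint. The heart of the argument is to construct one global diffeomorphism $\varphi$ of $M$ such that, for each $j$,
\[
\varphi(A_j\setminus E_j)\subset B_j,\qquad \operatorname{Vol}\Bigl(\bigcup_{j}E_j\Bigr)<\delta,
\]
where $E_j\subset A_j$ is a small ``error'' set and $\delta>0$ is prescribed. Because a diffeomorphism is a bijection of $M$ while the $B_j$ are tiny, the complementary set $E:=\bigcup_j E_j$ is forced to map \emph{onto} $M\setminus\bigcup_j B_j$; this is possible precisely because $\varphi$ may have a very large Jacobian on $E$, expanding a set of measure $<\delta$ to fill the rest of $M$. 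I would build $\varphi$ as a composition of diffeomorphisms supported in coordinate charts: using connectedness, join each $A_j$ to its target ball $B_j$ through a chain of overlapping charts, and in each chart apply an explicit radial ``squeezing'' map that compresses the bulk of the incoming volume toward $B_j$ while fixing the chart boundary, so that the local maps glue to a global diffeomorphism.

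With $\varphi$ in hand the estimate is routine bookkeeping. On $A_j\setminus E_j$ one has $s=c_j$ and $|f_1\circ\varphi-c_j|<\eta$, while on the error set $E$ the crude bound $|f_1\circ\varphi-s|\le \max f_1-\min f_1=:\Lambda$ holds. Hence
\[
\|f_1\circ\varphi-s\|_{L^p(M)}^p\le \eta^p\operatorname{Vol}(M)+\Lambda^p\delta,
\]
and choosing $\eta$ and $\delta$ small makes the right-hand side as small as desired; combining with $\|s-f_2\|_{L^p}<\eta$ completes the proof for any finite $p$, in particular for $p>n$.

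The main obstacle is the construction of the global volume-redistributing diffeomorphism $\varphi$, together with the verification that the local squeezing maps glue smoothly and that $\varphi$ is a genuine diffeomorphism (smooth with smooth inverse and bijective) rather than merely a measurable rearrangement. The two delicate points are (i) routing the compression through chains of charts so that connectedness is actually used to reach each $B_j$, and (ii) arranging that the small leftover set $E$ expands to cover $M\setminus\bigcup_j B_j$, which is what makes $\varphi$ surjective; once these are handled the $L^p$ estimate above is immediate.
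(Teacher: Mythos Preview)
The paper does not give its own proof of this lemma: it is stated with a direct citation to Kazdan--Warner \cite[Theorem~2.1]{Kazdan&Warner2} and then used as a black box in the proof of Theorem~\ref{thm1.3}. So there is no in-paper argument to compare your proposal against.

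Your outline is in the right spirit and matches the strategy of the original Kazdan--Warner proof: reduce to a step function with values in the range of $f_1$, locate for each value $c_j$ a small ball on which $f_1$ is close to $c_j$, and then build a single diffeomorphism that compresses most of each piece $A_j$ into the corresponding ball. The $L^p$ bookkeeping at the end is correct once such a $\varphi$ exists.

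The gap, which you yourself flag, is that the entire content of the theorem lies in the construction of $\varphi$, and your description of it (``chains of charts'', ``radial squeezing'', ``glue to a global diffeomorphism'') is a plan rather than a proof. Two points deserve more care. First, the pieces $A_j$ must be chosen to be domains with smooth boundary (not merely measurable sets or raw ``coordinate cubes'' that do not partition a manifold), since you need to push them around by isotopies; one typically triangulates or uses a cover by small balls and handles overlaps. Second, arranging a \emph{single} diffeomorphism that simultaneously sends each $A_j\setminus E_j$ into its own $B_j$, with all $B_j$ disjoint and $\operatorname{Vol}(E)<\delta$, requires an inductive construction: at each stage you compress one region while keeping previously placed regions fixed, and you must check that the successive compositions remain smooth and that the accumulated error sets stay small. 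None of this is impossible, but it is exactly the work that Kazdan--Warner carry out, and it cannot be waved through. If you want to present a self-contained argument rather than cite the result, this construction needs to be written out.
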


We also need the following.

\begin{Proposition}\label{prop6.2}
 Let $f\in L^p(M)$ with $p>n$. Suppose that $\mathcal{DR}_{(g_0,\phi_0)}^*$ is injective.
 There is an~$\eta>0$ such that if
\begin{equation*}
 \|f-R_{\phi_0}^m\|_{L^p(M)}<\eta,
\end{equation*}
then there is a metric measure $(g_1,\phi_1)\in \mathcal{M}^{2,p}\times W^{2,p}$ such that $\mathcal{R}(g_1,\phi_1)=f$. Moreover,~$(g_1,\phi_1)$ is smooth in any open set where $f$ is smooth.
\end{Proposition}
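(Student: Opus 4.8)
The plan is to solve the equation $\mathcal{R}(g,\phi)=f$ by a perturbation argument based on the surjectivity of the linearized operator $\mathcal{DR}_{(g_0,\phi_0)}$ at the background structure, exactly in the spirit of Fischer--Marsden and of Proposition~\ref{prop3.6}, but now carried out in the Sobolev spaces $W^{l,p}$ rather than in $C^{k,\alpha}$, since the target datum $f$ is only $L^p$. First I would recall from Lemma~\ref{lem3.1} that $\mathcal{R}\colon\bigl(W^{2,p}\cap\mathcal{M}\bigr)\times W^{2,p}\to L^p(M)$ is a smooth map of Banach manifolds (the relevant case of the lemma is $l=2$, which requires $p>n/2$; here we even have $p>n$). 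Its differential at $(g_0,\phi_0)$ is the operator $\mathcal{DR}_{(g_0,\phi_0)}$ of Lemma~\ref{lem3.1}, viewed as a bounded operator $W^{2,p}\times W^{2,p}\to L^p$.

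The key point is that $\mathcal{DR}_{(g_0,\phi_0)}$ is surjective with splitting kernel. Since $\mathcal{DR}^*_{(g_0,\phi_0)}$ has injective principal symbol (it is overdetermined-elliptic, as noted after Lemma~\ref{lem3.1}), the composition $\mathcal{DR}_{(g_0,\phi_0)}\mathcal{DR}^*_{(g_0,\phi_0)}$ is a strictly elliptic second-order operator on functions; by $L^p$ elliptic theory it is Fredholm of index zero from $W^{4,p}$ to $L^p$, and its kernel equals the (smooth) kernel of $\mathcal{DR}^*_{(g_0,\phi_0)}$, which is trivial by hypothesis. Hence $\mathcal{DR}_{(g_0,\phi_0)}\mathcal{DR}^*_{(g_0,\phi_0)}$ is an isomorphism onto $L^p$, and in particular $\mathcal{DR}_{(g_0,\phi_0)}$ is surjective onto $L^p$. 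A right inverse is furnished explicitly by $\mathcal{DR}^*_{(g_0,\phi_0)}\bigl(\mathcal{DR}_{(g_0,\phi_0)}\mathcal{DR}^*_{(g_0,\phi_0)}\bigr)^{-1}\colon L^p\to W^{2,p}\times W^{2,p}$, which is bounded; equivalently, by the Berger--Ebin splitting theorem \cite{Berger&Ebin} the kernel of $\mathcal{DR}_{(g_0,\phi_0)}$ admits the closed complement $\mathrm{Im}\,\mathcal{DR}^*_{(g_0,\phi_0)}$, just as in the proof of Proposition~\ref{prop3.6}.

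With a bounded right inverse in hand, the implicit function theorem (equivalently, a direct Newton iteration using $\mathcal{DR}^*_{(g_0,\phi_0)}\bigl(\mathcal{DR}_{(g_0,\phi_0)}\mathcal{DR}^*_{(g_0,\phi_0)}\bigr)^{-1}$) gives an $\eta>0$ and a smooth map from the $\eta$-ball around $R^m_{\phi_0}$ in $L^p$ into $W^{2,p}\times W^{2,p}$ sending $R^m_{\phi_0}$ to $(g_0,\phi_0)$ and each $f$ with $\|f-R^m_{\phi_0}\|_{L^p}<\eta$ to some $(g_1,\phi_1)$ with $\mathcal{R}(g_1,\phi_1)=f$; shrinking $\eta$ keeps $g_1$ inside the open set $\mathcal{M}^{2,p}$ of metrics. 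Finally, the regularity addendum follows from interior elliptic regularity: on any open set $\Omega$ where $f$ is smooth, $(g_1,\phi_1)$ solves there the strictly elliptic system obtained by composing $\mathcal{R}$ with the parametrix direction used in the iteration (the same bootstrap as in \cite{Fischer&Marsden} and \cite{Case&Lin&Yuan1}), so $(g_1,\phi_1)\in C^\infty(\Omega)$. The main obstacle is the first bullet of this argument — establishing that $\mathcal{DR}_{(g_0,\phi_0)}$ is surjective onto $L^p$ with a \emph{bounded} right inverse in the $W^{2,p}$ category; once the $L^p$ Fredholm theory for the overdetermined-elliptic adjoint is set up carefully (and the identification of $\ker\bigl(\mathcal{DR}_{(g_0,\phi_0)}\mathcal{DR}^*_{(g_0,\phi_0)}\bigr)$ with $\ker\mathcal{DR}^*_{(g_0,\phi_0)}$ is made via the integration-by-parts identity displayed before Definition~\ref{def_wstatic}), the remainder is a routine application of the implicit function theorem and elliptic bootstrapping.
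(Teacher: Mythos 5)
Your proposal is correct and follows essentially the same route as the paper: the paper restricts to the slice $(\overline g,\overline\phi)=(g_0,\phi_0)+\mathcal{DR}^*_{(g_0,\phi_0)}(u)$ with $u\in W^{4,p}$ and applies the implicit function theorem to $S(u)=\mathcal{R}\bigl(\overline g,\overline\phi\bigr)$, whose linearization $\mathcal{DR}_{(g_0,\phi_0)}\mathcal{DR}^*_{(g_0,\phi_0)}$ is an isomorphism $W^{4,p}\to L^p$ precisely because its kernel coincides with $\ker\mathcal{DR}^*_{(g_0,\phi_0)}=\{0\}$ and the operator is elliptic of index zero. Your explicit right inverse $\mathcal{DR}^*_{(g_0,\phi_0)}\bigl(\mathcal{DR}_{(g_0,\phi_0)}\mathcal{DR}^*_{(g_0,\phi_0)}\bigr)^{-1}$ and the concluding bootstrap are the same argument, merely phrased via a right inverse rather than via the restricted map $S$.
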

\begin{proof}
 We consider the following operator $S\colon U\subset W^{4,p}(M)\to L^p(M)$ defined by
\begin{equation*}
 S(u)=R_{\overline{\phi}}^m,
\end{equation*}
where
\begin{equation*}
 \bigl(\overline{g},\overline{\phi}\bigr)=(g_0,\phi_0)+\mathcal{DR}_{(g_0,\phi_0)}^*(u)
\end{equation*}
and $U$ is a sufficiently small neighborhood of zero in $W^{4,p}$. We claim that $S'(0)$ is an isomorphism restricted to a small neighborhood in $W^{4,p}$ norm. In fact, $S(0)=\mathcal{R}(g_0,\phi_0)$ and
\begin{equation*}
 S'(0)v=\mathcal{DR}_{(g_0,\phi_0)}\mathcal{DR}^*_{(g_0,\phi_0)}v.
\end{equation*}
Hence, $\ker S'(0)=\ker \mathcal{DR}_{(g_0,\phi_0)}\mathcal{DR}^*_{(g_0,\phi_0)}\subseteq\ker \mathcal{DR}^*_{(g_0,\phi_0)}=\{0\}$,
which implies $\ker S'(0)=\{0\}$. It follows from the implicit function theorem that $S$ maps a neighborhood of zero in $W^{4,p}$ onto a neighborhood $S(0)=\mathcal{R}(g_0,\phi_0)$ in $L^p(M)$. Thus there is an $\eta>0$ such that if
\begin{equation*}
 \|f_1-R_{\phi_0}^m\|_{L^p(m)}<\eta,
\end{equation*}
then there exists a solution $(g_1,\phi_1)=(g_0,\phi_0)+\mathcal{DR}^*(u)$ of $\mathcal{R}(g_1,\phi_1)=f$. Using elliptic regularity and a bootstrap argument, we have that if $f$ is smooth, then $u$ is smooth.
\end{proof}

Now we are ready to prove Theorem \ref{thm1.3}.

\begin{proof}[Proof of Theorem \ref{thm1.3}]
We perturb $(g_0,\phi_0)$ slightly, if necessary, to obtain $(g_1,\phi_1)$
such that the weighted scalar curvature $\mathcal{R}(g_1,\phi_1)$ is not constant
with $c_1\min K< \mathcal{R}(g_1,\phi_1)<c_1\max K$ in $M$, where $c_1>0$ is constant. It follows from Proposition \ref{propRneg} that $\bigl(M,g_1,{\rm e}^{-\phi_1}{\rm d}V_{g_1},m\bigr)$ is not a weighted vacuum static space.
In particular, $\mathcal{DR}_{(g_1,\phi_1)}^*$ is injective. It follows from Lemma~\ref{approxlem} that there is a diffeomorphism $\varphi$ of $M$ such that $\|c_1K\circ\varphi-k_1\|_p<\eta$, where $p>\dim M$. Since~$\mathcal{DR}_{(g_1,\phi_1)}^*$ is injective,
we can apply Proposition \ref{prop6.2} to conclude that
there is a smooth metric measure $(g_2,\phi_2)$ such that $\mathcal{R}(g_2,\phi_2)=c_1K\circ \varphi$.
If we let $(g,\phi)=\bigl(\varphi^{-1}\bigr)^* (c_1(g_2,\phi_2) )$,
the smooth metric measure space $\bigl(M,g,{\rm e}^{-\phi}{\rm d}V_g,m\bigr)$ has weighted scalar curvature being equal to~$K$.
\end{proof}

\subsection*{Acknowledgements}
The authors would like to thank the referees for comments and suggestions, which improve the presentation of this paper.
The first author was supported by the National Science and Technology Council (NSTC),
Taiwan, with grant Number: 112-2115-M-032-006-MY2,
and the second author was supported by a KIAS Individual Grant (SP070701) via the Center for Mathematical Challenges at Korea Institute for Advanced Study.

\pdfbookmark[1]{References}{ref}
\LastPageEnding

\end{document}